\def\affaddr{}
\newcommand{\email}[1]{{\small\texttt{#1}}}
\theoremstyle{definition}
\newtheorem{definition}{Definition}[section]
\newtheorem{remark}[definition]{Remark}
\newtheorem{example}[definition]{Example}
\theoremstyle{plain}
\newtheorem{lemma}[definition]{Lemma}
\newtheorem{proposition}[definition]{Proposition}
\newtheorem{theorem}[definition]{Theorem}
\newtheorem{corollary}[definition]{Corollary}
\newcommand{\bxi}{{{\mathbf{\xi}}}}
\newcommand{\bmu}{{{\mathbf{\mu}}}} 
\newcommand{\bnu}{{{\mathbf{\nu}}}} 
\newcommand{\bpartial}{{\mbox{\boldmath$\partial$}}}
\newcommand{\balpha}{{\alpha}}
\newcommand{\bbeta}{{\beta}}
\newcommand{\bgamma}{{\gamma}}
\newcommand{\bLambda}{\mathbf{\Lambda}}
\newcommand{\pp}[1]{d_{#1}} 
\def\nil{o}
\def\rank{{\rm rank}}
\def\mM{{\tt M}}
\def\xb{\mathbf{x}}
\def\fb{\mathbf{f}}
\def\bx{{\bf x}}
\def\Cc{{\mathcal C}}
\def\Nc{{\mathcal N}} 
\def\f{{\bf f}}
\def\bz{{\bf z}}
\def\bM{{\bf M}}
\def\e{{\bf e}}
\def\K{{\mathbb C}}
\def\KK{{\mathbb K}}
\def\I{{\mathcal I}}
\def\QQ{{Q}}
\def\C{{\mathbb C}}
\def\N{{\mathbb N}}
\def\Q{{\mathbb Q}}
\def\ord{\mathrm{ord}}
\def\sp{{\rm{span}}}
\def\DDD{{\mathscr D}}
\def\m{{\mathfrak m}}
\def\mult{{\delta}}
\def\bmu{{\mu}}
\begin{document}


\title{Certifying isolated singular points and their multiplicity structure}

\author{
\hspace{-0.75cm}\begin{tabular}{ccc}
 \begin{minipage}{5.5cm}
Jonathan D. Hauenstein\thanks{Research partly supported by DARPA YFA, NSF grant 
ACI-1460032, and Sloan Research Fellowship.}\\
      \affaddr{Department of Applied and Computational Mathematics and Statistics,}\\
       \affaddr{University of Notre Dame}\\
      \affaddr{Notre Dame, IN, 46556, USA}\\
       \email{hauenstein@nd.edu}
\end{minipage}
& 
\begin{minipage}{4.5cm}
Bernard Mourrain\\
\affaddr{Inria Sophia Antipolis
}\\
\affaddr{2004 route des Lucioles, B.P. 93,}\\
\affaddr{06902 Sophia Antipolis,}\\
\affaddr{Cedex France}\\
       \email{bernard.mourrain@inria.fr }
\end{minipage} 
& 
\begin{minipage}{6cm}
Agnes Szanto\thanks{Research partly supported by NSF grant CCF-1217557.}\\
       \affaddr{Department of Mathematics, }\\
       \affaddr{North~Carolina~State~University}\\
       \affaddr{Campus Box 8205,}\\
       \affaddr{Raleigh, NC, 27965, USA.}\\
       \email{aszanto@ncsu.edu}\\
\end{minipage}
\end{tabular}
}

\date{10 July 2015}
\maketitle

\begin{abstract}
  This paper presents two new constructions related to singular solutions of polynomial systems. 
  The first is a new deflation method for  
  an isolated singular root. This construction uses a single linear
  differential form defined from the Jacobian matrix of the input, and
  defines the deflated system by applying this differential form to
  the original system.  The advantages of this new deflation is that it
  does not introduce new variables and the increase in the number of
  equations is linear instead of the quadratic increase of previous
  methods. The second construction gives the coefficients of
  the so-called inverse system or dual basis, which defines the
  multiplicity structure at the singular root.  We present a system of
  equations in the original variables plus a relatively small number
  of new variables.  We show that the roots of this new system
  include the original singular root but now with multiplicity
  one, and the new variables uniquely determine the multiplicity structure.  
  Both constructions are ``exact'' in that they permit one to 
  treat all conjugate roots simultaneously and can be used 
  in certification procedures for singular roots and 
  their multiplicity structure with respect to 
  an exact rational polynomial system.
\end{abstract}

\section{Introduction}

Our motivation for this work is twofold. On one hand, in a recent paper \cite{AkHaSz2014},  two of the co-authors of the present paper studied a certification method
for approximate roots of exact overdetermined and singular polynomial systems, and wanted to extend the method to certify the multiplicity structure at the root as well. Since all these problems are ill-posed, in \cite{AkHaSz2014} a hybrid symbolic-numeric approach was proposed, that included the exact computation of a square polynomial system that had the original root with multiplicity one. In certifying singular roots, this exact square system was obtain from a deflation technique that added subdeterminants of the Jacobian matrix to the system iteratively. However, the multiplicity structure is destroyed by this deflation technique, that is why  it remained an open question how to certify the multiplicity structure of singular roots of exact polynomial systems.

Our second motivation was to find a method that  simultaneously refines the accuracy of a singular root and the parameters describing the multiplicity structure at the root. In all previous numerical approaches that approximate these parameters, they apply numerical linear algebra to solve a linear system with coefficients depending on the approximation of the coordinates of the singular root. Thus  the local convergence rate of the parameters was slowed   from the quadratic convergence of Newton's iteration applied to the singular roots. We were interested if the parameters describing the multiplicity structure can be simultaniously approximated with the coordinates of the singular root using Newton's iteration. 

In the present paper we first give a new improved version of the deflation method that can be used in the certification algorithm of \cite{AkHaSz2014}, reducing the number of added equations at each deflation iteration from quadratic to linear. We prove that applying a single linear differential form to the input system, corresponding to a generic kernel element of the Jacobian matrix, already reduced both the multiplicity and the depth of the singular root. Secondly, we give a description of 
the multiplicity structure using a  polynomial number of parameters, and express these parameters together with the coordinates of the singular point as the roots of a multivariate polynomial system. We prove that this new polynomial system has a root corresponding to the singular root but now with multiplicity one, and the new added coordinates describe the multiplicity structure. Thus this second approach completely deflates the system in one step. The number of equations and variables in the second construction depends polynomially on the number of variables and  equations of the input system and the multiplicity of the singular root. Both constructions
  are exact in the sense that approximations of the coordinates of
  the singular point are only used to detect numerically non-singular
  submatrices, and not in the coefficients of the constructed polynomial systems.

\textbf{Related work.}

The treatment of singular roots is a critical issue for numerical analysis and there is a
huge literature on methods which transform the problem into a new one
for which Newton-type methods converge quadratically to the root.
 
Deflation techniques which add new equations in order to
reduce the multiplicity have already been considered in
\cite{Ojika1983463}, \cite{Ojika1987199}:
By triangulating the Jacobian matrix at the (approximate) root,
new minors of the polynomial Jacobian matrix are added to the initial
system in order to reduce the multiplicity of the singular solution.

A similar approach is used in \cite{HauWam13} and \cite{GiuYak13}, where a maximal
invertible block of the Jacobian matrix at the (approximate) root is
computed and minors of the polynomial Jacobian matrix are added to the
initial system. In \cite{GiuYak13}, an additional step  is
considered where the first derivatives of the input polynomials are
added when the Jacobian matrix at the root vanishes.

These constructions are repeated until a system with a simple root is obtained.

In these methods, at each step, the number of added equations is $(n-r)\times (m-r)$,
where $n$ is number of variables, $m$ is the number of equations and $r$
is the rank  of the Jacobian at the root.

In~\cite{Lecerf02}, a triangular presentation of the ideal in a
good position and derivations with respect to the leading variables are used
to iteratively reduce the multiplicity. This process is applied for p-adic
lifting with exact computation.  


In other approaches, new variables and new equations are introduced simultaneously.
In \cite{YAMAMOTO:1984-03-31}, new variables are introduced to
describe some perturbations of the initial equations and some differentials which
vanish at the singular points.
This approach is also used in \cite{LiZhi2014}, where it is shown that
this iterated deflation process yields a system with a simple root.

In \cite{mantzaflaris:inria-00556021}, 
perturbation variables  are 
also introduced in relation with the inverse system of the singular point
to obtain directly a deflated system with a simple root.
The perturbation is constructed from a monomial basis of the local
algebra at the multiple root.

In~\cite{lvz06,lvz08}, only variables for the differentials of the initial
system are introduced.
The analysis of this deflation is improved in \cite{DaytonLiZeng11},
where it is shown that the number of steps is bounded by the order
of the inverse system.
This type of deflation is also used in \cite{LiZhi2013}, for the special case
where the Jacobian matrix at the multiple root has rank $n-1$ (case of
breath one).

In these methods, at each step, the number of variables is at least doubled and new
equations are introduced, which are linear in these new variables.

The mentioned deflation techniques usually breaks the structure of the local
ring at the singular point. The first method to compute the inverse system describing this structure is due to
F.S. Macaulay \cite{mac1916}
and known as the dialytic method.
More recent algorithms for the construction of inverse systems are described
e.g. in \cite{Marinari:1995:GDM:220346.220368}, reducing the size of the
intermediate linear systems (and exploited in
\cite{Stetter:1996:AZC:236869.236919}). 
In~\cite{zeng05}, the dialytic method is used, and they analyze the relationship of deflation some methods to the inverse system.
It had been further improved in \cite{Mourrain97} and more recently in 
\cite{mantzaflaris:inria-00556021},
using an integration method. This technique reduces significantly the
cost of computing the inverse system, since it relies on the solution
of linear system related to the inverse system truncated in some
degree and not on the number of monomials in this degree.
Multiplication matrices corresponding to systems with singular roots were studied in \cite{Moller95,Corless97}.

The computation of inverse systems has been used to approximate a
multiple root.
In~\cite{Pope2009606}, a minimization approach is used to reduce the value of
the equations and their derivatives at the approximate root, assuming a basis
of the inverse system is known. 
In 
\cite{WuZhi2011}, the inverse system is constructed via
Macaulay's method; tables of multiplications are deduced and their
eigenvalues are used to improve the approximated root. They 
show that the convergence is quadratic at the multiple root. In \cite{LiZhi12}
they show that in the breadth one case the parameters needed to describe the 
inverse system is small, and use it to compute the singular roots in  \cite{LiZhi12b}.
In \cite{mantzaflaris:inria-00556021}, the inverse system is used to
transform the singular root into a simple root of an augmented system.




\textbf{Contributions.} 

We propose a new deflation method for polynomial systems with
isolated singular points, which does introduce new
parameters. At each step, a single differential of the system is
considered based on the analysis of the Jacobian at the singular
point. A linear number of new
equations is added instead of the quadratic increases of the previous
deflations.
The deflated system does not involved any approximate coefficients and
can therefore be used in certification methods as in \cite{AkHaSz2014}.

To approximate efficiently both the singular point and its inverse
system, we propose a new deflation, which involves a small number of
new variables compared to other approaches which rely on Macaulay
matrices. It is based on a new characterization of the isolated
singular point together with its multiplicity structure.  The deflated
polynomial system exploits the nilpotent and commutation properties of
the multiplication matrices in the local algebra of the singular
point.  We prove that it has a simple root which yields the root and
the coefficients of the inverse system at this singular point. Due to
the upper triangular form of the multiplication matrices in a
convenient basis of the local algebra, the number of new parameters
introduced in this deflation is less than ${1\over 2} n (\mult -1)\mult $
where $n$ is the number of variables and $\mult$ the multiplicity of the
singular point.  The parameters involved in the deflated system are
determined from the analysis of an approximation of the singular
point. Nevertheless, the deflated system does not involve any
approximate coefficients and thus it can also be used in certification
techniques as \cite{AkHaSz2014}.

  In this paper we present two new constructions. The first one is a
  new deflation method for a system of polynomials with an isolated
  singular root. The new construction uses a single linear
  differential form defined from the Jacobian matrix of the input, and
  defines the deflated system by applying this differential form to
  the original system.  We prove that the resulting deflated system
  has strictly lower multiplicity and depth at the singular point than
  the original one. The advantage of this new deflation is that it
  does not introduce new variables, and the increase in the number of
  equations is linear, instead of the quadratic increase of previous
  deflation methods. The second construction gives the coefficients of
  the so called inverse system or dual basis, which defines the
  multiplicity structure at the singular root. The novelty of our
  construction is that we show that the nilpotent and commutation
  properties of the multiplication matrices define smoothly the
  singular points and its inverse system.  We give a system of
  equations in the original variables plus a relatively small number
  of new variables, and prove that the roots of this new system
  correspond to the original multiple roots but now with multiplicity
  one, and they uniquely determine the multiplicity structure.  The
  number of unknowns used to describe the multiplicity structure is
  significantly smaller, compared to the direct computation of the
  dual bases from the so called Macaulay matrices.  Both constructions
  are ``exact" in the sense that approximations of the coordinates of
  the singular point are only used to detect numerically non-singular
  submatrices, and not in the rest of the construction. Thus these
  constructions would allow to treat all conjugate roots
  simultaneously, as well as to apply these constructions in the
  certification of the singular roots and the multiplicity structure
  of an exact rational polynomial system.

\section{Preliminaries}

Let $\f:= (f_1, \ldots, f_N)\in \KK[\bx ]^N$ with  $\bx =(x_1, \ldots, x_n)$ for some $\KK\subset \C$ field. Let $\bxi=(\xi_1, \ldots, \xi_n)\in \C^n$ be an isolated multiple  root of $\f$. 
Let $I=\langle f_1, \ldots, f_N\rangle$, $\m_{\xi}$ be the maximal ideal at ${\xi}$ and $\QQ$ be the primary component of $I$ at $\bxi$ so that $\sqrt{\QQ}=\m_{\xi}$.\\

Consider the ring of power series $\K[[\bpartial_\xi]]:= \K[[\partial_{1,\xi}, \ldots, \partial_{n,\xi}]]$ and we use the notation 
for  $\bbeta=(\beta_1, \ldots, \beta_n)\in \N^n$  
$$\bpartial^{\bbeta}_{\xi}:=\partial_{1, \xi}^{\beta_1}\cdots \partial_{n, \xi}^{\beta_n}.$$ 
We identify $\C[[\bpartial_\xi]]$ with the dual space $\C[\bx ]^*$  by
considering $\bpartial^{\bbeta}_{\xi}$ as derivations and evaluations at $\bxi$,  defined by 
\begin{equation}\label{partial}
\bpartial^{\bbeta}_{ \xi}(p):=\bpartial^\bbeta(p)|_{\bxi} := \frac{d^{|\bbeta|  } p}{d x_1^{\beta_1}\cdots d x_n^{\beta_n}} (\bxi) \quad \text{ for } p\in \C[\bx].
\end{equation}
Hereafter, the derivations ``at $\xb$'' will be denoted
$\bpartial^{\bbeta}$ instead of $\bpartial^{\bbeta}_{\xb}$. The
derivation with respect to the variable $\partial_{i}$ is denoted
$\pp{\partial_{i}}$ $(i=1,\ldots, n)$.
Note that 
$$
 \frac{1}{\bbeta!} \bpartial^{\bbeta}_{ \xi}((\bx-\bxi)^\balpha)=\begin{cases} 1  & \text{ it } \balpha=\bbeta\\
0 & \text{ otherwise}
\end{cases},
$$
where we use the notation $\frac{1}{\bbeta!}= \frac{1}{\beta_1!\cdots \beta_n!} $.

For $p\in \K[\bx]$ and $ \Lambda\in \K[[\bpartial_\xi]]=\K[\bx]^{*}$,
let 
$$
p\cdot \Lambda: q \mapsto \Lambda (p\,q).
$$
We check that $p=(x_i-\xi_i)$ acts as a derivation on
$\C[[\bpartial_\xi]]$:
$$ 
(x_{i}-\xi_{i}) \cdot \bpartial^{\bbeta}_{ \xi}= \pp{\partial_{i, \xi}} (\bpartial^{\bbeta}_{ \xi})
$$

For an ideal $I\subset \K[\bx]$, let $I^{\perp}=\{\Lambda \in
\K[[\bpartial_{\xi}]]\mid \forall p\in I, \Lambda (p)=0\}$.
The vector space $I^{\perp}$ is naturally identified with the dual
space of $\K[\bx]/I$.
We check that $I^{\perp}$ is a vector subspace of $\K[[\bpartial_{\xi}]]$,
which is stable by the derivations ${d_{\partial_{i, \xi}}}$.

\begin{lemma}\label{lem:primcomp}
If $Q$ is a $\m_{\xi}$-primary component of $I$, then $Q^{\perp}=I^{\perp}\cap\K[\bpartial_{\xi}]$.
\end{lemma}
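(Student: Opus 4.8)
The plan is to prove the two inclusions $Q^{\perp}\subseteq I^{\perp}\cap\K[\bpartial_{\xi}]$ and $I^{\perp}\cap\K[\bpartial_{\xi}]\subseteq Q^{\perp}$ separately, using the primary decomposition of $I$ and the finiteness of $Q$. First I would fix a primary decomposition $I=Q\cap Q_{2}\cap\cdots\cap Q_{s}$, where $Q$ is the $\m_{\xi}$-primary component and the $Q_{j}$ for $j\geq 2$ are primary to maximal ideals $\m_{j}\neq\m_{\xi}$ (or more general primes not contained in $\m_\xi$; since $\bxi$ is isolated, the relevant components are the ones whose radical is $\m_\xi$). Since $I\subseteq Q$, dualizing reverses the inclusion and gives $Q^{\perp}\subseteq I^{\perp}$; moreover $Q$ is $\m_{\xi}$-primary, hence $\K[\bx]/Q$ is a finite-dimensional local $\K$-algebra, so $Q^{\perp}$ is finite-dimensional and in fact consists of polynomials in $\bpartial_\xi$ (every element of $Q^\perp$ is annihilated by a power of each $(x_i-\xi_i)$, which translates into bounded degree in $\partial_{i,\xi}$). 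This already yields $Q^{\perp}\subseteq I^{\perp}\cap\K[\bpartial_{\xi}]$.

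For the reverse inclusion, take $\Lambda\in I^{\perp}\cap\K[\bpartial_{\xi}]$; I must show $\Lambda(q)=0$ for all $q\in Q$. The key step is to use that $\Lambda$ is a \emph{polynomial} in the $\bpartial_\xi$, hence has some finite order, say $\Lambda$ vanishes on $\m_{\xi}^{t+1}$ — equivalently $\Lambda$ only ``sees'' the Taylor expansion at $\bxi$ up to order $t$. Because the $Q_{j}$ for $j\geq 2$ are not contained in $\m_\xi$, there is a polynomial $g\in Q_{2}\cap\cdots\cap Q_{s}$ with $g(\bxi)\neq 0$; then $g$ is a unit in the local ring at $\bxi$, and in particular $g\notin\m_{\xi}$. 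Given $q\in Q$, the product $g\,q$ lies in $Q\cap Q_{2}\cap\cdots\cap Q_{s}=I$, so $\Lambda(g\,q)=0$, i.e. $(g\cdot\Lambda)(q)=0$. It therefore suffices to show that $g\cdot\Lambda$ and $\Lambda$ act identically on $Q$, or better yet that $\Lambda$ can be recovered from $g\cdot\Lambda$ by a unit action: since $g(\bxi)\neq 0$, write $g = g(\bxi)(1 - h)$ with $h\in\m_\xi$; the operator $p\mapsto p\cdot\Lambda$ on the finite-dimensional space of dual elements of order $\leq t$ is then invertible when $p$ is a unit, because $h$ acts nilpotently there (it raises vanishing order). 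Concretely, $\Lambda = g(\bxi)^{-1}(1 + h + h^{2} + \cdots)\cdot(g\cdot\Lambda)$, a finite sum on the order-$\leq t$ truncation, so $\Lambda(q)=0$ follows from $(g\cdot\Lambda)(q')=0$ for the relevant shifts $q'$ — all of which lie in $Q$ since $Q$ is an ideal. Hence $\Lambda\in Q^{\perp}$.

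The main obstacle I anticipate is making the ``unit action is invertible'' argument precise: one must restrict to a finite-dimensional subspace of $\K[\bpartial_\xi]$ stable under the relevant multiplications (e.g. all dual functionals of order $\leq t$, or the span of $\{p\cdot\Lambda : p\in\K[\bx]\}$, which is finite-dimensional precisely because $\Lambda$ is a polynomial functional), and verify that multiplication by an element of $\m_\xi$ is nilpotent on it while multiplication by a unit is therefore invertible. An alternative, perhaps cleaner, route to the reverse inclusion is to argue directly via localization: $I^\perp$ decomposes as $\bigoplus_j Q_j^\perp$ where the dual functionals supported at distinct points $\m_j$ are ``polynomial'' only in the corresponding $\bpartial_{\zeta_j}$ variables, and a functional lying in $\K[\bpartial_\xi]$ must have its support concentrated at $\bxi$, hence lies in $Q^\perp$. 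I would present whichever of these is shortest; both hinge on the same fact that $Q$ being $\m_\xi$-primary forces $Q^\perp$ to be exactly the polynomial (finite-order) part of $I^\perp$.
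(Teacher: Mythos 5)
The paper states this lemma without proof --- it is treated as classical (it goes back to Macaulay's inverse systems and appears in the Marinari--Mora--M\"oller and Mourrain references) --- so there is no in-paper argument to compare against; your proof has to stand on its own, and it does. The easy inclusion is handled correctly: $\m_{\xi}^{N}\subseteq Q$ for some $N$ forces every $\Lambda\in Q^{\perp}$ to vanish on all $(\bx-\bxi)^{\alpha}$ with $|\alpha|\geq N$, hence to lie in $\K[\bpartial_{\xi}]$, and $I\subseteq Q$ gives $Q^{\perp}\subseteq I^{\perp}$. For the converse, your mechanism is the standard one and it closes. Two points deserve to be made explicit in a final write-up. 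First, the reason the other primary components $Q_{j}$ are not contained in $\m_{\xi}$ is precisely that $\bxi$ is an \emph{isolated} root: a prime $P\subseteq\m_{\xi}$ containing $I$ with $P\neq\m_{\xi}$ would give a positive-dimensional component of $V(I)$ through $\bxi$; this is what guarantees $g\in Q_{2}\cap\cdots\cap Q_{s}$ with $g(\bxi)\neq 0$. Second, the invertibility step you flag as the main obstacle is indeed the crux, but your resolution is correct: multiplication by any $h\in\m_{\xi}$ strictly lowers the order of an element of $\K[\bpartial_{\xi}]$ (since $(x_{i}-\xi_{i})$ acts as the derivation $d_{\partial_{i,\xi}}$, as recalled in the paper's preliminaries), so $h$ is nilpotent on the space of functionals of order at most $t$, and $\Lambda=g(\bxi)^{-1}\sum_{k=0}^{t}h^{k}\cdot(g\cdot\Lambda)$; evaluating at $q\in Q$ gives $\Lambda(q)=g(\bxi)^{-1}\sum_{k=0}^{t}\Lambda(h^{k}g\,q)=0$ because each $h^{k}g\,q\in I$. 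I would spell out that last computation rather than leave it at ``relevant shifts,'' and I would drop the alternative localization route you sketch at the end: the direct-sum decomposition of $I^{\perp}$ into local duals requires a justification of essentially the same depth as the lemma itself, so it is not actually a shortcut.
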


This lemma shows that to compute $Q^{\perp}$, it suffices to compute
all polynomials of $\K[\bpartial_{\xi}]$ which are in $I^{\perp}$.
Let us denote this set $\DDD= I^{\perp}\cap\K[\bpartial_{\xi}]$. It is a
vector space stable under the derivations
$\pp{\partial_{i, \xi}}$. Its dimension is the dimension of
$Q^{\perp}$ or $\K[\bx]/Q$, that is the {\em multiplicity} of
$\xi$, denote it by $\mult_{\xi} (I)$, or simply by $\mult$  if $\xi$ and $I$ is clear from the context.

For an element $\Lambda(\bpartial_\xi) \in \K[\bpartial_\xi]$ we
define the {\em order}  ${\rm ord}(\Lambda)$ to be the maximal
$|\bbeta|$ such that $\bpartial^{\bbeta}_{ \xi}$ appears in
$\Lambda(\bpartial_\xi)$ with non-zero coefficient.  

For $t\in \N$, let $\DDD_{t}$ be the elements of $\DDD$ of order $\leq t$.
As $\DDD$ is of dimension $d$, there exists a smallest $t\geq 0$ such that
$\DDD_{t+1}= \DDD_{t}$. Let us call this smallest $t$, the {\em nil-index} of
$\DDD$ and denote it by $\nil_{\xi} (I)$, or simply by $\nil$. As $\DDD$ is stable by the derivations
$\pp{\partial_{i, \xi}}$,
we easily check that for $t\geq \nil_{\xi} (I)$, $\DDD_{t}=\DDD$ and
that $\nil_{\xi} (I)$ is the maximal degree of the elements in $\DDD$.

\section{Deflation using first differentials}\label{Sec:Deflation}

\noindent To improve the numerical approximation of a root, one usually
applies a Newton-type method to converge quadratically from
a nearby solution to the root of the system, provided it is simple.
In the case of multiple roots, deflation techniques are employed to
transform the system into another one which has an equivalent root
with a smaller multiplicity or even with multiplicity one.

We describe here a construction, using differentials of order one,
which leads to a system with a simple root. This construction improves
the constructions in \cite{lvz06,DaytonLiZeng11} 
since no new variables are added. 
It also improves the constructions presented in 
\cite{HauWam13} and the ``kerneling'' method of \cite{GiuYak13}
by adding a smaller number of equations at each deflation step.
Note that, in \cite{GiuYak13}, there are smart preprocessing and postprocessing
steps which could be utilized in combination with our method.  In the preprocessor, one
adds directly partial derivatives of polynomials which are zero at the root.  
The postprocessor extracts a square subsystem of the completely deflated system 
for which the Jacobian has full rank at the root.

Consider the Jacobian matrix $J_{\fb} (\bx) = \left[\partial_{j} f_{i} (\bx)\right]$ of the
initial system $\fb$.
By reordering properly the rows and columns (i.e., polynomials and variables),
it can be put in the form 
\begin{equation} 
J_{\fb} (\bx) 
:= \left [
\begin{array}{cc}
A (\bx) & B (\bx) \\
C (\bx) & D (\bx)
\end{array}
\right] 
\end{equation}
where $A(\bx)$ is an $r\times r$ matrix with
$r = \rank J_{\fb}(\bxi) = \rank A(\bxi)$.

Suppose that $B(\bx)$ is an $r\times c$ matrix.  The $c$ columns
\begin{eqnarray*}\label{parkernel}
\det (A (\bx)) \left[
\begin{array}{c}
-A^{-1} (\bx) B (\bx) \\
\mathrm{Id}
\end{array}
\right]
\end{eqnarray*}
(for $r= 0$ this is the identity matrix) yield the $c$ elements 
$$ 
\Lambda_{1}^{\bx}=\sum_{i=1}^{n} \lambda_{1,j} (\bx) \partial_{j},~\ldots,~
\Lambda_{c}^{\bx}=\sum_{i=1}^{n}\lambda_{c,j} (\bx) \partial_{j}.
$$
Their coefficients $\lambda_{i,j}(\bx)\in \KK[\bx]$ are polynomial in the
variables $\bx$. 
Evaluated at $\bx=\bxi$, they generate the kernel of
$J_{\fb} (\bxi)$ and form a basis of $\DDD_{1}$. 
\begin{definition}
The family $D^{\bx}_{1}=\{\Lambda_{1}^{\bx}, \ldots,
\Lambda_{c}^{\bx}\}$ is the {\em formal} inverse system of
order $1$ at $\bxi$. 
For $\bm i=\{i_{1},\ldots, i_{k}\}\subset$ $\{1, \ldots, c\}$ 
with $|\bm i|\neq 0$, the $\bm i$-{\em deflated system} of order~$1$~of~$\fb$~is
$$ 
\{\fb, {\Lambda}_{i_{1}}^{\bx} (\fb), \ldots, {\Lambda}_{i_{k}}^{\bx} (\fb)\}.
$$
\end{definition}
By construction, for $i=1,\ldots,c$, 
$$
{\Lambda}_{i}^{\bx} (\fb) 
=  \sum_{j=1}^{n} \partial_{j} (\fb) {\lambda}_{i,j}(\bx) 
= \det (A (\bx)) J_{\fb} (\bx) [{\lambda}_{i,j}(\bx)]
$$ 
has $n-c$ zero entries. Thus,
the number of non-trivial new equations added in the
$\bm i$-deflated system
is \mbox{$|\bm i|\cdot(N-n+c)$}.
The construction depends on the choice of the invertible block $A(\bxi)$ in
$J_{\fb} (\bxi)$. 
By a linear invertible transformation of the initial system and by
computing a $\bm i$-deflated system, one obtains 
a deflated system constructed from any $|\bm i|$ linearly 
independent elements of the~kernel~of~$J_{\fb} (\bxi)$.

\begin{example}\label{Ex:Illustrative} Consider the multiplicity $2$ root $\bxi = (0,0)$ 
for the system $f_1(\bx) = x_1 + x_2^2$ and $f_2(\bx) = x_1^2 + x_2^2$.
Then,
{\small $$
J_{\fb}(\bx) =
\left[
\begin{array}{cc}
A (\bx) & B (\bx) \\
C (\bx) & D (\bx)
\end{array}
\right] = \left[\begin{array}{cc} 1 & 2x_2 \\ 2x_1 & 2x_2 \end{array}\right].
$$}
The corresponding vector $[-2x_2 ~~ 1]^T$ yields the element 
$$\Lambda_1^{\bx} = -2x_2\partial_1 + \partial_2.$$
Since $\Lambda_1^{\bx}(f_1) = 0$, the $\{1\}$-deflated system of 
order $1$ of $\fb$ is
$$
\left\{x_1 + x_2^2, ~~x_1^2 + x_2^2, ~-4x_1x_2 + 2x_2\right\}
$$
which has a multiplicity $1$ root at $\bxi$.
\end{example}
 
We use the following to analyze this deflation procedure.

\begin{lemma}[Leibniz rule]
For $a,b\in \KK[\bm x]$, 
$$ 
\bpartial^{\alpha} (a\,b) =\sum_{\beta\in \N^{n}} \frac{1}{\beta!} \bpartial^{\beta} (a) \pp{\partial}^{\bm\beta}
(\bpartial^{\alpha}) (b).
$$
\end{lemma}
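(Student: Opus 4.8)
The plan is to reduce the statement to the classical multivariate Leibniz rule for iterated partial derivatives in the polynomial ring $\KK[\bx]$, and then to translate the resulting binomial coefficients into the action of the operators $\pp{\partial_i}$ on $\K[\bpartial_\xi]$. First I would establish, for all $a,b\in\KK[\bx]$ and $\alpha\in\N^{n}$, the operator identity
$$
\partial^{\alpha}(a\,b)=\sum_{\beta\le\alpha}\binom{\alpha}{\beta}\,\bigl(\partial^{\beta}a\bigr)\,\bigl(\partial^{\alpha-\beta}b\bigr),\qquad \binom{\alpha}{\beta}:=\prod_{i=1}^{n}\binom{\alpha_{i}}{\beta_{i}},
$$
where $\partial^{\gamma}:=\partial_{x_{1}}^{\gamma_{1}}\cdots\partial_{x_{n}}^{\gamma_{n}}$ is the iterated partial derivative operator and $\beta\le\alpha$ means $\beta_{i}\le\alpha_{i}$ for every $i$. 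This is proved by a short induction on $|\alpha|$ from the product rule $\partial_{x_{i}}(uv)=(\partial_{x_{i}}u)v+u(\partial_{x_{i}}v)$ together with Pascal's identity, using that the $\partial_{x_{i}}$ commute. Evaluating both sides at $\bxi$ and using the definition~\eqref{partial} of $\bpartial^{\gamma}$ gives $\bpartial^{\alpha}(a\,b)=\sum_{\beta\le\alpha}\binom{\alpha}{\beta}\,\bpartial^{\beta}(a)\,\bpartial^{\alpha-\beta}(b)$.

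It then remains to recognize $\binom{\alpha}{\beta}\,\bpartial^{\alpha-\beta}$ as $\tfrac{1}{\beta!}\,\pp{\partial}^{\bm\beta}(\bpartial^{\alpha})$. Here $\pp{\partial}^{\bm\beta}:=\pp{\partial_{1}}^{\beta_{1}}\cdots\pp{\partial_{n}}^{\beta_{n}}$ and each $\pp{\partial_{i}}$ is the formal derivation of $\K[\bpartial_\xi]$ with respect to the variable $\partial_{i,\xi}$, so applying it to the monomial $\bpartial^{\alpha}_{\xi}$ lowers the $i$-th exponent by one and multiplies by that exponent. Iterating, for $\beta\le\alpha$ one gets $\pp{\partial}^{\bm\beta}(\bpartial^{\alpha}_{\xi})=\bigl(\prod_{i}\tfrac{\alpha_{i}!}{(\alpha_{i}-\beta_{i})!}\bigr)\,\bpartial^{\alpha-\beta}_{\xi}=\tfrac{\alpha!}{(\alpha-\beta)!}\,\bpartial^{\alpha-\beta}_{\xi}$, whereas $\pp{\partial}^{\bm\beta}(\bpartial^{\alpha}_{\xi})=0$ as soon as $\beta_{i}>\alpha_{i}$ for some $i$. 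Since $\tfrac{1}{\beta!}\cdot\tfrac{\alpha!}{(\alpha-\beta)!}=\binom{\alpha}{\beta}$, the $\beta$-th summand $\tfrac{1}{\beta!}\,\bpartial^{\beta}(a)\,\pp{\partial}^{\bm\beta}(\bpartial^{\alpha})(b)$ equals $\binom{\alpha}{\beta}\,\bpartial^{\beta}(a)\,\bpartial^{\alpha-\beta}(b)$ when $\beta\le\alpha$ and vanishes otherwise, so extending the summation from $\{\beta\le\alpha\}$ to all of $\N^{n}$ is harmless and the two formulas agree.

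I do not expect a genuine obstacle — the proof is bookkeeping — but the point that needs care is the double role of $\bpartial_{\xi}$: it is simultaneously a formal variable of the polynomial ring $\K[\bpartial_\xi]$ on which $\pp{\partial_{i}}$ acts by differentiation, and a derivation-evaluation functional on $\KK[\bx]$ as in~\eqref{partial}; one must check that the falling-factorial factor produced by $\pp{\partial}^{\bm\beta}$ combines with the $\tfrac{1}{\beta!}$ to reproduce exactly the classical binomial coefficient. A clean way to make everything explicit is to note that both sides of the claimed identity are bilinear in $(a,b)$, so it suffices to verify it on monomials $a=(\bx-\bxi)^{\sigma}$ and $b=(\bx-\bxi)^{\tau}$, for which every evaluation above collapses to $0$ or a factorial via $\tfrac{1}{\gamma!}\,\bpartial^{\gamma}_{\xi}\bigl((\bx-\bxi)^{\delta}\bigr)=1$ iff $\delta=\gamma$; both sides then reduce to $\alpha!$ precisely when $\sigma+\tau=\alpha$ and to $0$ otherwise.
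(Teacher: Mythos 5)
Your proof is correct. The paper actually states this lemma without any proof (treating it as a standard fact), and your argument --- reducing to the classical multivariate Leibniz rule and checking that $\tfrac{1}{\beta!}\pp{\partial}^{\bm\beta}(\bpartial^{\alpha})=\binom{\alpha}{\beta}\bpartial^{\alpha-\beta}$ for $\beta\le\alpha$ and $0$ otherwise --- is exactly the standard justification the authors are implicitly relying on.
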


\begin{proposition}\label{deflation:1} Let $r$ be the rank of
  $J_{\fb}(\bxi)$. Assume that $r<n$. Let $\bm i\subset
  \{1,\ldots,n\}$ with $0<|\bm i|\leq n-r$ 
  and $\fb^{(1)}$~be the $\bm i$-{\em deflated system} of order $1$ of
$\fb$. Then, $\mult_{\bxi}
(\fb^{(1)})\geq 1$ and $\nil_{\bxi} (\fb^{(1)}) < \nil_{\bxi} (\fb)$, which also implies that $\mult_{\bxi}
(\fb^{(1)})< \mult_{\bxi}
(\fb)$.
\end{proposition}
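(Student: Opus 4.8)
The plan is to work entirely on the dual side, tracking how the inverse system $\DDD(\fb)$ shrinks when we pass to $\fb^{(1)}$. Write $\DDD = \DDD(\fb) = I^{\perp}\cap\K[\bpartial_\xi]$ with nil-index $\nil := \nil_{\bxi}(\fb)$, so by the last paragraph of the Preliminaries $\nil$ is the maximal order of an element of $\DDD$ and $\DDD_{\nil}=\DDD \supsetneq \DDD_{\nil-1}$. Let $\DDD' = \DDD(\fb^{(1)})$; since $\fb^{(1)} \supseteq \fb$ as ideals, we automatically have $\DDD' \subseteq \DDD$, hence $\mult_{\bxi}(\fb^{(1)}) \le \mult_{\bxi}(\fb)$ and $\bxi$ is still a root, giving $\mult_{\bxi}(\fb^{(1)}) \ge 1$ for free. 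The real content is the strict inequality $\nil_{\bxi}(\fb^{(1)}) < \nil$: once that is shown, $\DDD'_{\nil-1} = \DDD' \subsetneq \DDD = \DDD_{\nil}$ forces $\mult_{\bxi}(\fb^{(1)}) = \dim \DDD' < \dim\DDD = \mult_{\bxi}(\fb)$, which is the final clause.

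To prove $\nil_{\bxi}(\fb^{(1)}) < \nil$ it suffices to exhibit, for every $\Lambda \in \DDD$ with $\ord(\Lambda) = \nil$, a generator $g$ of the ideal $\langle \fb^{(1)}\rangle$ such that $\Lambda(g) \ne 0$; this shows $\Lambda \notin \DDD'$, so $\DDD'$ contains no element of order $\nil$, i.e. its nil-index drops. The natural candidate for $g$ is one of the new equations $\Lambda_{i}^{\bx}(\fb) = \sum_{j} \partial_j(\fb)\,\lambda_{i,j}(\bx)$. The key computation is to evaluate $\Lambda$ against $\Lambda_i^{\bx}(f_\ell)$ using the Leibniz rule stated just above the proposition: for a product $a\cdot b$ with $a = \lambda_{i,j}(\bx)$ and $b = \partial_j f_\ell$, the term $\bpartial^{\alpha}(a\,b)$ expands as $\sum_{\beta}\frac{1}{\beta!}\bpartial^{\beta}(a)\,\pp{\partial}^{\beta}(\bpartial^{\alpha})(b)$. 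Applying a top-order $\bpartial^{\alpha}$ (with $|\alpha| = \nil$) and keeping only the $\beta = 0$ contribution (since $\lambda_{i,j}(\bxi) \ne 0$ only if $\lambda_{i,j}$ has a nonzero constant term — and at least the ``identity'' block guarantees some $\lambda_{i,j}(\bxi)$ is a nonzero constant), one finds that $\Lambda$ applied to the new equation is, up to the nonzero scalar $\det A(\bxi)$, essentially $(\Lambda \circ \Lambda_i)$ applied to $\fb$, where $\Lambda\circ\Lambda_i$ denotes the differential operator obtained by ``integrating'' or composing. More precisely: the operator $\Lambda_i^{\bx}$ at $\bx = \bxi$ equals $\Lambda_i := \sum_j \lambda_{i,j}(\bxi)\partial_j$, a degree-one element of $\DDD_1$, and the crucial identity is that $\Lambda\big(\Lambda_i^{\bx}(\fb)\big)\big|_{\bxi}$ picks out the coefficient of $\Lambda_i$ when one writes $\pp{\partial}^{?}$-type relations — concretely, it detects whether the order-$\nil$ part of $\Lambda$ survives under the action dual to multiplication.

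The cleanest way to package the last step is via the stability structure on $\DDD$. Since $\DDD$ is closed under the derivations $\pp{\partial_{i,\xi}}$, and $\nil$ is the top order, pick $\Lambda\in\DDD$ with $\ord(\Lambda)=\nil$ and let $\Lambda^{top}$ be its leading (order-$\nil$) homogeneous part. Because $\nil \ge 1$ (the root is singular, $r < n$), $\Lambda^{top}$ is divisible by at least one $\partial_j$; writing $\Lambda^{top} = \sum_j \partial_j \cdot \mu_j$ with $\mu_j = \pp{\partial_j}(\Lambda^{top})$ of order $\nil - 1$, one shows using the kernel property of the $\Lambda_i$ that the family $\{\Lambda_i\}$ spans $\DDD_1$, hence there is a linear combination matching the ``$\partial$-content'' of $\Lambda^{top}$, and for that combination $\sum_i c_i\Lambda\big(\Lambda_i^{\bx}(\fb)\big) \ne 0$. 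I expect the main obstacle to be exactly this bookkeeping: showing that not all the pairings $\Lambda(\Lambda_i^{\bx}(f_\ell))$ vanish, i.e. that applying the first-order differentials $\Lambda_i$ genuinely ``peels off one order'' from the top of the inverse system rather than annihilating everything. This hinges on the fact that $\DDD_1$ is precisely the kernel of $J_{\fb}(\bxi)$ (stated in the paragraph defining $D_1^{\bx}$) together with the Leibniz/integration identity $(x_i - \xi_i)\cdot\bpartial^{\beta}_\xi = \pp{\partial_{i,\xi}}(\bpartial^{\beta}_\xi)$ recorded in the Preliminaries; combining these, any top-order $\Lambda\in\DDD$ can be written as $\sum_i \Lambda_i \cdot \Lambda_i^{-}$ for lower-order duals $\Lambda_i^{-}$ plus lower-order terms, and evaluating this decomposition against the new equations yields the required non-vanishing. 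I would also double-check the degenerate case $r = 0$, where the new equations are literally $\partial_j f_\ell$ and the argument reduces to the classical fact that differentiating the generators lowers the order of the dual — a useful sanity check that the general computation specializes correctly.
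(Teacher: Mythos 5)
Your framing---work on the dual side, note $\DDD^{(1)}\subseteq\DDD$, and reduce everything to a strict drop of the nil-index---is exactly the paper's, and the easy parts ($\mult_{\bxi}(\fb^{(1)})\geq 1$ because the new polynomials vanish at $\bxi$, and ``nil-index drops $\Rightarrow$ multiplicity drops'') are fine. The gap is in the key step. You reduce to the claim that every $\Lambda\in\DDD$ of order $\nil$ satisfies $\Lambda\bigl(\Lambda_i^{\bx}(f_\ell)\bigr)\neq 0$ for some new generator. This pointwise claim is strictly stronger than what is needed ($\Lambda\notin\DDD^{(1)}$ only requires $\Lambda$ to fail on some element of the new \emph{ideal}, not on a generator), and there is no reason for it to hold: by the Leibniz computation, $\Lambda\bigl(\Lambda_i^{\bx}(f_\ell)\bigr)=\phi_i(\Lambda)(f_\ell)$ where $\phi_i(\Lambda)$ has order $\nil+1$, and a functional of order exceeding $\nil$ can perfectly well annihilate the finitely many generators without annihilating the ideal (e.g.\ $\partial_1^3$ annihilates the generator $x_1^2$ but not the ideal $(x_1^2)$). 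Your sketch for forcing non-vanishing---choose coefficients so that $\sum_i c_i\Lambda_i$ matches the ``$\partial$-content'' of the leading form of $\Lambda$---is not carried out, and moreover it presupposes access to enough kernel elements to span $\DDD_1$; the proposition allows $|\bm i|=1$ while the corank $c$ may be larger, so no such combination is available in precisely the case the proposition is designed for.

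The paper closes this gap by arguing with the whole space $\DDD^{(1)}$ at once rather than one functional at a time. Setting $\phi_i(\Lambda)=\sum_{\beta}\Delta_{i,\beta}\,\pp{\partial}^{\beta}(\Lambda)$, one computes $\pp{\partial_{j,\xi}}(\phi_i(\Lambda))=\sum_\beta\lambda_{i,j,\beta}\pp{\partial}^\beta(\Lambda)+\phi_i(\pp{\partial_{j,\xi}}(\Lambda))$, so that $\DDD+\phi_i(\DDD^{(1)})$ is stable under derivations (here the derivation-stability of $\DDD^{(1)}$ itself is essential); since every element of this space kills $\fb$, and a derivation-stable subspace of $\K[\bpartial_{\bxi}]$ killing the generators kills the ideal, it must equal $\DDD$. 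Because $\Delta_{i,0}$ is a nonzero linear form (this is where the kernel property of $[\lambda_{i,j}(\bxi)]$ enters), $\phi_i$ raises the order by exactly one, so every $\Lambda\in\DDD^{(1)}$ has order at most $\nil-1$. This works for a single $i$ and never needs the pointwise non-vanishing you were after. To salvage your route you would have to replace ``$\Lambda(g)\neq0$ for some generator $g$'' by ``$\pp{\partial}^{\beta}(\Lambda)(g)\neq 0$ for some $\beta$ and some generator $g$,'' at which point the stability argument reappears and you are back to the paper's proof.
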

\begin{proof}
By construction, for $i\in\bm i$, 
the polynomials ${\Lambda}_{i}^{\bx} (\fb)$
vanish at $\bxi$, so that $\mult_{\bxi} (\fb^{(1)})\ge 1$. 
By hypothesis, the Jacobian of $\fb$ is not 
injective yielding $\nil_{\bxi}(\fb)> 0$.
Let $\DDD^{(1)}$ be the inverse
system of $\fb^{(1)}$ at $\bxi$. 
Since $(\fb^{(1)})\supset (\fb)$, 
we have $\DDD^{(1)}\subset \DDD$.
In particular, for any non-zero element $\Lambda \in \DDD^{(1)}\subset
\KK[\bm\partial_{\bxi}]$ and $i\in \bm i$, 
$\Lambda (\fb)=0$ and $\Lambda ({\Lambda}^{\bx}_{i}
(\fb))=0$.  

Using Leibniz rule, for any $p\in \KK[\bx]$, we have 
{\scriptsize
\begin{eqnarray*}
\Lambda ({\Lambda}_{i}^{\bx} (p)) &=&
\Lambda \left(\sum_{j=1}^{n} {\lambda}_{i,j} (\bx)\partial_{j} (p)\right)\\
&=&
\sum_{\beta\in \N^{n}} \sum_{j=1}^{n} \frac{1}{\beta!} \bpartial_{\bxi}^{\bm\beta} ( {\lambda}_{i,j} (\bx)) 
\pp{\partial_{\bxi}}^{\bm\beta}
(\Lambda) \partial_{j,\bxi} (p)\\ 
&=&
\sum_{\beta\in \N^{n}} \sum_{j=1}^{n} \frac{1}{\beta!} \bpartial_{\bxi}^{\bm\beta} ( {\lambda}_{i,j} (\bx)) \bpartial_{j,\xi} \pp{\partial_{\xi}}^{\bm\beta}
(\Lambda) (p)\\
&=&
\sum_{\beta\in \N^{n}} \Delta_{i,\beta} \pp{\partial_{\xi}}^{\bm\beta} (\Lambda) (p)\\
\end{eqnarray*}
} where 
{\small
$$ 
\Delta_{i,\bm\beta}=\sum_{j=1}^{n}
{\lambda}_{i,j,\bm\beta} \partial_{j,\xi}\in\KK[\bm\partial_{\bxi}]
\hbox{~and~}
{\lambda}_{i,j,\bm\beta} = \frac{1}{\bm\beta!}
\partial_{\bxi}^{\bm\beta}({\lambda}_{i,j}(\bx))\in \KK.$$
}

The term $\Delta_{i,\bm 0}$ is
$\sum_{j=1}^{n}
\lambda_{i,j} (\bxi) \partial_{j,\xi}$ which has 
degree $1$ in~$\bpartial_{\bxi}$ 
since $[\lambda_{i,j} (\bxi)]$ is a non-zero element of
$\ker J_{\fb} (\bxi)$.
For simplicity, let $\phi_{i}(\Lambda):= \sum_{\bm\beta\in \N^{n}} \Delta_{i,\bm\beta}
\pp{\partial}^{\bm\beta} (\Lambda)$.

For any $\Lambda\in \C[\bpartial_{\bxi}]$, we have
{\small
\begin{eqnarray*}
\pp{\partial_{j,\xi}} (\phi_{i}(\Lambda)) 
&=&\sum_{\beta\in \N^{n}} \lambda_{i,j,\beta}   \pp{\partial}^{\bm\beta}(\Lambda)+
\Delta_{i,\beta}  \pp{\partial}^{\bm\beta} (\pp{\partial_{j,\xi}}
(\Lambda))\\
&=&\sum_{\beta\in \N^{n}} \lambda_{i,j,\beta}   \pp{\partial}^{\bm\beta}(\Lambda)+
\phi_{i} (\pp{\partial_{j,\xi}} (\Lambda)).
\end{eqnarray*} 
} Moreover, if $\Lambda \in \DDD^{(1)}$, then by definition
$\phi_{i}(\Lambda) (\fb)=0$. 
Since $\DDD$ and $\DDD^{(1)}$ are both stable by derivation,
it follows that $\forall \Lambda \in \DDD^{(1)}$,
$\pp{\partial_{j,\xi}} (\phi_{i} (\Lambda))\in \DDD^{(1)}+ \phi_{i}(\DDD^{(1)})$. 
Since \mbox{$\DDD^{(1)}\subset \DDD$}, we know $\DDD+\phi_{i} (\DDD^{(1)})$ is stable by
derivation.  For any element $\Lambda$ of $\DDD+\phi_{i} (\DDD^{(1)})$,
$\Lambda (\fb)=0$.  We deduce that $\DDD+\phi_{i} (\DDD^{(1)})=\DDD$.
Consequently, the order of the elements in $\phi_{i} (\DDD^{(1)})$ 
is at most $\nil_{\bxi} (\fb)$.
The statement follows since $\phi_i$ increases the order by $1$,
therefore $\nil_{\bxi}(\fb^{(1)})< \nil_{\bxi}(\fb)$.
\end{proof}

We consider now a sequence of deflations of the system~$\fb$.
Let $\fb^{(1)}$ be the ${\bm i}_{1}$-deflated system of $\fb$. We
construct inductively  
$\fb^{(k+1)}$ as the ${\bm i}_{k+1}$-deflated system of $\fb^{(k)}$ for some
choices of ${\bm i}_{j}\subset \{1,\ldots,n\}$. 
 
\begin{proposition}
There exists $k\leq \nil_{\bxi} (\fb)$ such that $\bxi$ is a
simple root of $\fb^{(k)}$.
\end{proposition}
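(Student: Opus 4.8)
The plan is to iterate Proposition~\ref{deflation:1} and use the strict decrease of the nil-index as a termination certificate. First I would observe that the statement only asserts existence, so the strategy is simply to keep deflating until the Jacobian becomes injective at $\bxi$. Concretely, set $\fb^{(0)}:=\fb$ and, given $\fb^{(k)}$, let $r_k$ be the rank of $J_{\fb^{(k)}}(\bxi)$. If $r_k = n$, then $\bxi$ is already a simple root of $\fb^{(k)}$ (the Jacobian is injective, equivalently $\mathcal{D}^{(k)}_1 = \{0\}$, equivalently $\nil_{\bxi}(\fb^{(k)}) = 0$ and $\mult_{\bxi}(\fb^{(k)}) = 1$), and we stop. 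Otherwise $r_k < n$, so we may pick some $\bm i_{k+1}\subset\{1,\ldots,n\}$ with $0 < |\bm i_{k+1}| \le n - r_k$ and form the $\bm i_{k+1}$-deflated system $\fb^{(k+1)}$ of order $1$ of $\fb^{(k)}$.

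Next I would invoke Proposition~\ref{deflation:1} applied to $\fb^{(k)}$ in place of $\fb$: since $r_k < n$, it gives $\mult_{\bxi}(\fb^{(k+1)}) \ge 1$ and, crucially, the strict inequality $\nil_{\bxi}(\fb^{(k+1)}) < \nil_{\bxi}(\fb^{(k)})$. Thus the sequence of nonnegative integers $\nil_{\bxi}(\fb^{(0)}) > \nil_{\bxi}(\fb^{(1)}) > \nil_{\bxi}(\fb^{(2)}) > \cdots$ is strictly decreasing as long as we have not stopped, so after at most $\nil_{\bxi}(\fb^{(0)}) = \nil_{\bxi}(\fb)$ steps it must reach $0$. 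I would then note that $\nil_{\bxi}(\fb^{(k)}) = 0$ means $\mathcal{D}^{(k)} = \mathcal{D}^{(k)}_0$ consists only of evaluation at $\bxi$ (order-zero derivations), so $\mathcal{D}^{(k)}$ is one-dimensional; equivalently $\mult_{\bxi}(\fb^{(k)}) = \dim \mathcal{D}^{(k)} = 1$, which is precisely the statement that $\bxi$ is a simple root of $\fb^{(k)}$. Taking the first such $k$ gives $k \le \nil_{\bxi}(\fb)$, as claimed.

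One small point worth spelling out in the write-up is the equivalence, at each stage, between "$\nil_{\bxi}(\fb^{(k)}) = 0$'', "$J_{\fb^{(k)}}(\bxi)$ is injective'', and "$\bxi$ is a simple root of $\fb^{(k)}$'': the inverse system $\mathcal{D}^{(k)}_1$ of order $\le 1$ is spanned by the constant functional together with the kernel of $J_{\fb^{(k)}}(\bxi)$ (as recalled in Section~\ref{Sec:Deflation}), so $\nil_{\bxi}(\fb^{(k)}) = 0$ exactly when that kernel is trivial, i.e. when the Jacobian has full column rank $n$, i.e. when $\bxi$ is a simple isolated root. The remaining verification — that at each non-terminal step a valid index set $\bm i_{k+1}$ of size at most $n - r_k \ge 1$ exists and that $\bxi$ remains an isolated root of $\fb^{(k+1)}$ with $\mult_{\bxi}(\fb^{(k+1)}) \ge 1$ so that the process is well-defined — is immediate from Proposition~\ref{deflation:1}.

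Honestly there is no real obstacle here: all the analytic content lives in Proposition~\ref{deflation:1}, and this statement is just the formal consequence of iterating it together with the well-ordering of $\N$. The only thing requiring mild care is making the base-case/termination-case bookkeeping precise — i.e., being explicit that the bound $k \le \nil_{\bxi}(\fb)$ follows because each deflation step strictly drops a nonnegative integer, and that hitting nil-index $0$ is the same as having a simple root — rather than any genuinely new argument.
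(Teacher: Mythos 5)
Your proposal is correct and follows essentially the same route as the paper: iterate Proposition~\ref{deflation:1}, use the strict decrease of the nil-index to bound the number of steps by $\nil_{\bxi}(\fb)$, and conclude from $\nil_{\bxi}(\fb^{(k)})=0$ together with $\mult_{\bxi}(\fb^{(k)})\geq 1$ that $\bxi$ is a simple root. The extra bookkeeping you spell out (the equivalence between nil-index zero, injectivity of the Jacobian at $\bxi$, and simplicity of the root) is left implicit in the paper but is a harmless and correct elaboration.
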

\begin{proof}
By Proposition \ref{deflation:1}, $\mult_{\bxi} (\fb^{(k)}) \geq 1$ and $\nil_{\bxi} (\fb^{(k)})$ is
strictly decreasing with $k$ until it reaches the value $0$.
Therefore, there exists $k\leq \nil_{\bxi}(I)$ such that $\nil_{\bm\xi}
(\fb^{(k)})=0$ and $\mult_{\bxi} (\fb^{(k)})\geq~1$.
This implies that $\bxi$ is a simple root of $\fb^{(k)}$.
\end{proof}
 
To minimize the number of equations added at each deflation step, we
take $|\bm i|=1$. Then, the number of non-trivial 
new equations added at each step is at most $N-n+c$.

We described this approach using first order differentials 
arising from the Jacobian, but this 
can be easily extended to use higher order differentials.

\section{The multiplicity structure}\label{Sec:PointMult}
Before describing our results, we start this section by recalling the  definition of orthogonal primal-dual  pairs of bases for the space $\K[\bx ]/Q$ and its dual. The following is a definition/lemma: 

\begin{lemma}[Orthogonal primal-dual basis pair]\label{pdlemma}
Let $\f$, $\bxi$,  $Q$, $\DDD$, $\mult= \mult_\xi(\f)$ and $\nil=\nil_\xi(\f)$ be as above. 
Then there exists a primal-dual basis pair of the local ring  $\K[\bx ]/ \QQ$ with the following properties:
\begin{itemize}
\item The {\em primal basis} of the local ring  $\K[\bx ]/ \QQ$ has the form 
\begin{equation}\label{pbasis}
B:=\left\{( \bx-\xi)^{\balpha_1},  ( \bx-\xi)^{\balpha_2},\ldots, ( \bx-\xi)^{\balpha_{\mult}}\right\}.
\end{equation}  
We can assume that  $\alpha_1=0$ and that the monomials in $B$ are  {\em connected to 1}
(c.f. \cite{Mourrain99-nf}). Define the set of exponents in $B$ 
$$
E:=\{\alpha_1, \ldots, \alpha_\mult\}.
$$ 
\item There is a unique {\em dual basis} $\bLambda\subset \DDD$ orthogonal to $B$, i.e. the elements of  $\bLambda$ are given in the following form:
\begin{eqnarray}\label{Macbasis}
\Lambda_{0}&=& \bpartial^{\balpha_1}_\bxi=1_\bxi\nonumber \\
\Lambda_{1}&=&\frac{1}{\balpha_1 !}\bpartial_\bxi^{\balpha_1}  +\sum_{|\bbeta|\leq \nil \atop \bbeta\not\in E}\nu_{\balpha_1, \bbeta} \;\bpartial_\bxi^{\bbeta}\nonumber\\
&\vdots&\\
\Lambda_{\mult-1}&=&\frac{1}{\balpha_{\mult} !}\bpartial_\bxi^{\balpha_{\mult}}  +\sum_{|\bbeta|\leq \nil\atop \bbeta\not\in E}\nu_{\balpha_\mult, \bbeta}\; \bpartial_\bxi^{\bbeta},\nonumber
\end{eqnarray} 
\item We have 
$0=\ord(\Lambda_{0}) \leq \cdots \leq \ord(\Lambda_{\mult-1})$, and  for  all    $0\leq t\leq \nil$ we have 
$$
\DDD_t={\rm span}\left\{ \Lambda_{j}\;:\;  \ord(\Lambda_{{j}})\leq t \right\},
$$ where $\DDD_{t}$ denotes the elements of $\DDD$ of order $\leq t$, as above.
\end{itemize}
\end{lemma}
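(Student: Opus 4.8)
The plan is to obtain the primal basis $B$ from a standard basis of $\QQ$ for a suitable local monomial order, and then to deduce the dual basis $\bLambda$, its Macaulay normal form, and its compatibility with the filtration $(\DDD_{t})_{t}$ almost for free, by duality and one dimension count.

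First I would fix a monomial order $\prec$ on $\K[\bx]$, in the shifted variables $\bx-\bxi$, refining the partial order ``smaller total degree $\Rightarrow$ larger monomial'' (an anti-graded, i.e.\ local, order, for which $1$ is the $\prec$-largest monomial). Since $\QQ$ is $\m_{\xi}$-primary, $\K[\bx]/\QQ$ is finite dimensional and the standard monomials of $\QQ$ for $\prec$ form a $\K$-basis of it; these are the $(\bx-\bxi)^{\balpha}$ with $\balpha$ ranging over the complement $E$ of the exponent set of the monomial ideal $L_{\prec}(\QQ)$. As $L_{\prec}(\QQ)$ is a monomial ideal, $E$ is an order ideal (closed under division), so the monomials of $B:=\{(\bx-\bxi)^{\balpha}:\balpha\in E\}$ are connected to $1$ and $0\in E$ (because $1\notin\QQ$); this yields the first bullet, with $\balpha_{1}=0$. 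Now let $\bLambda=\{\Lambda_{\balpha}:\balpha\in E\}$ be the basis of $\DDD$ dual to $B$: it exists and is unique since, by Lemma~\ref{lem:primcomp}, $\DDD=\QQ^{\perp}$ is the full dual of the finite dimensional space $\K[\bx]/\QQ$, so the evaluation pairing $\langle p,\Lambda\rangle:=\Lambda(p)$ is perfect. Expanding $\Lambda_{\balpha}=\sum_{\bbeta}\nu_{\balpha,\bbeta}\tfrac{1}{\bbeta!}\bpartial_{\xi}^{\bbeta}$ in the basis $\{\tfrac{1}{\bbeta!}\bpartial_{\xi}^{\bbeta}\}$ dual to the monomials, the relation recalled above gives $\nu_{\balpha,\bgamma}=\langle(\bx-\bxi)^{\bgamma},\Lambda_{\balpha}\rangle$, which is $1$ for $\bgamma=\balpha$ and $0$ for $\bgamma\in E\setminus\{\balpha\}$; since in addition $\Lambda_{\balpha}\in\DDD$ has order $\le\nil$, this is precisely the Macaulay form of the second bullet.

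The key step, and the one I expect to be the main obstacle, is the identity
$$
\#\{\balpha\in E:|\balpha|\le t\}\;=\;\dim_{\K}\bigl(\K[\bx]/(\QQ+\m_{\xi}^{t+1})\bigr)\;=\;\dim_{\K}\DDD_{t}\qquad(t\ge0).
$$
The right-hand equality holds because an element of $\K[\bpartial_{\xi}]$ annihilates $\m_{\xi}^{t+1}$ iff it has order $\le t$, so $\DDD_{t}=(\QQ+\m_{\xi}^{t+1})^{\perp}$, whose dimension is the colength of $\QQ+\m_{\xi}^{t+1}$. For the left-hand equality I would prove that, because $\prec$ is anti-graded, $L_{\prec}(\QQ+\m_{\xi}^{t+1})=L_{\prec}(\QQ)+\m_{\xi}^{t+1}$: writing $p=q+r$ with $q\in\QQ$ and $r\in\m_{\xi}^{t+1}$, if $\deg L_{\prec}(p)\ge t+1$ then $L_{\prec}(p)\in\m_{\xi}^{t+1}$, whereas if $\deg L_{\prec}(p)\le t$ then every monomial of $r$ is strictly $\prec$-smaller than $L_{\prec}(p)$, forcing $L_{\prec}(p)=L_{\prec}(q)\in L_{\prec}(\QQ)$; the standard monomials of $\QQ+\m_{\xi}^{t+1}$ are then exactly the $(\bx-\bxi)^{\balpha}$ with $\balpha\in E$ and $|\balpha|\le t$. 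This is where the choice of order is essential; without a degree-compatible order the monomial basis need not see the order filtration.

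Granting this identity, the order statements reduce to dimension counts. The leading term $\tfrac{1}{\balpha!}\bpartial_{\xi}^{\balpha}$ gives $\ord(\Lambda_{\balpha})\ge|\balpha|$. For the reverse inequality, observe that the class of $(\bx-\bxi)^{\bbeta}$ lies in $(\m_{\xi}^{|\bbeta|}+\QQ)/\QQ$, and that this space equals $\mathrm{span}\{\overline{(\bx-\bxi)^{\bgamma}}:\bgamma\in E,\ |\bgamma|\ge|\bbeta|\}$, since the latter is contained in the former and, by the displayed identity applied at $t=|\bbeta|-1$, both have dimension $\#\{\bgamma\in E:|\bgamma|\ge|\bbeta|\}$. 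Hence in the unique expansion of $\overline{(\bx-\bxi)^{\bbeta}}$ on $B$ the coefficients at $\bgamma$ with $|\bgamma|<|\bbeta|$ vanish; as that coefficient equals $\Lambda_{\bgamma}((\bx-\bxi)^{\bbeta})=\nu_{\bgamma,\bbeta}$, the coefficient of $\bpartial_{\xi}^{\bbeta}$ in $\Lambda_{\bgamma}$ vanishes whenever $|\bbeta|>|\bgamma|$, i.e.\ $\ord(\Lambda_{\balpha})=|\balpha|$. Finally I would enumerate $E=\{\balpha_{1}=0,\balpha_{2},\dots,\balpha_{\mult}\}$ by nondecreasing total degree and put $\Lambda_{j}:=\Lambda_{\balpha_{j+1}}$; then $\ord(\Lambda_{j})=|\balpha_{j+1}|$ is nondecreasing, and for each $t$ the space $\mathrm{span}\{\Lambda_{j}:\ord(\Lambda_{j})\le t\}=\mathrm{span}\{\Lambda_{\balpha}:|\balpha|\le t\}$ is contained in $\DDD_{t}$ and has dimension $\#\{\balpha\in E:|\balpha|\le t\}=\dim\DDD_{t}$, hence equals $\DDD_{t}$. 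That is the last bullet.
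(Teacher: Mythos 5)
Your proof is correct, but it runs in the opposite direction from the paper's. The paper works entirely on the dual side: it fixes a graded reverse lexicographic order on $\K[\bpartial]$, echelonizes the $\mult$-dimensional space $\DDD$ to extract the initial monomials $\bpartial^{\balpha_i}$ and the auto-reduced basis $\bLambda$, reads off the filtration property from the fact that reducing an element of $\DDD_t$ against $\bLambda$ only involves the $\Lambda_j$ of order $\le t$, and obtains connectedness to $1$ from the stability of $\DDD$ under the derivations $\pp{\partial_{i,\xi}}$ (so the initial set is closed under division by variables). You instead work on the primal side: $B$ is the set of standard monomials of $\QQ$ for an anti-graded local order, connectedness is the statement that the complement of a monomial ideal is an order ideal, and the order and filtration claims become dimension counts resting on $\DDD_t=(\QQ+\m_{\xi}^{t+1})^{\perp}$ together with your identity $L_{\prec}(\QQ+\m_{\xi}^{t+1})=L_{\prec}(\QQ)+\m_{\xi}^{t+1}$, which you justify correctly using that $\prec$ is degree-anticompatible. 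The two viewpoints are linked by Gr\"obner duality (the initial of $\DDD$ for a graded order is the complement of the leading-term ideal of $\QQ$ for the opposite local order), but the mechanics differ: the paper's argument is self-contained linear algebra on the finite-dimensional space $\DDD$ and matches the integration method used later to compute $\bLambda$ in practice, whereas yours imports local standard-basis theory (that standard monomials for a local order form a basis of $\K[\bx]/\QQ$ when $\QQ$ is $\m_{\xi}$-primary) as an external prerequisite, but in exchange delivers $\dim\DDD_t=\#\{\balpha\in E:|\balpha|\le t\}$ and the vanishing $\nu_{\bgamma,\bbeta}=0$ for $|\bbeta|>|\bgamma|$ as clean dimension counts. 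Both proofs get uniqueness of $\bLambda$ the same way, from the perfect pairing between $\K[\bx]/\QQ$ and $\DDD$; the only cosmetic discrepancy is that you normalize the dual expansion by $\frac{1}{\bbeta!}$, so your $\nu_{\balpha,\bbeta}$ differ from those in \eqref{Macbasis} by a factorial factor, which is immaterial.
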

\begin{proof}
  Let $\succ$ be the graded reverse lexicographic ordering in $\K[\bpartial]$ such that $\partial_{1}\prec \partial_{2}\prec \cdots \prec \partial_{n}$.
  We consider the initial $\mathrm{In}(\DDD)=\{\mathrm{In}(\Lambda)\mid \Lambda \in \DDD\}$ of $\DDD$ for the monomial ordering $\succ$.
  It is a finite set of increasing monomials
$D:=\left\{\bpartial^{\balpha_0},  \bpartial^{\balpha_1},\ldots, \bpartial^{\balpha_{\mult-1}}\right\},$
which are the leading monomials of the elements of
$\bLambda=\{\Lambda_{0},  \Lambda_{1},\ldots$, $\Lambda_{{\mult-1}}\} \subset \DDD.$
As $1\in \DDD$ and is the lowest monomial $\succ$, we have $\Lambda_{0}=1$.
As $\succ$ is refining the total degree in $\K[\bpartial]$, we have $\ord({\Lambda}_{i})=|\balpha_{i}|$ and
$0=\ord({\Lambda}_{0}) \leq \cdots \leq \ord({\Lambda}_{\mult-1})$.
Moreover, every element in $\DDD_{t}$ reduces to $0$ by the elements in $\bLambda$.
As only the elements $\Lambda_{{i}}$ of order $\le t$ are involved in this reduction, we deduce that
$\DDD_{t}$ is spanned by the elements $\Lambda_{{i}}$ with $\ord({\Lambda}_{i})\leq t$.

Let $E=\{ \balpha_{0},\ldots, \balpha_{\mult-1}\}$.
The elements $\Lambda_{{i}}$ are of the form
$$
\Lambda_{{i}}=\frac{1}{\balpha_{i} !}\bpartial_\bxi^{\balpha_{i}}  +\sum_{|\bbeta|\prec |\balpha_{i}|}\nu_{\balpha_i, \bbeta}\; \bpartial_\bxi^{\bbeta}.
$$
By auto-reduction of the elements  $\Lambda_{{i}}$, we can even suppose that $\bbeta\not\in E$ in the summation above, so that they are of the form \eqref{Macbasis}.

Let ${B}(\xi)=\left\{( \bx-\xi)^{{\balpha}_0}, \ldots, ( \bx-\xi)^{{\balpha}_{\mult-1}}\right\}\subset \K[\bx]
$. As $(\Lambda_{i}((\bx-\xi)^{{\balpha}_j}))_{0\le i,j\leq \mult-1}$ is the identity matrix, we deduce that
$B$ is a basis of $\K[\bx ]/ \QQ$, which is dual to $\bLambda$.

As $\DDD$ is stable by derivation, the leading term of $\frac{d }{d\partial_{i}}(\Lambda_{j})$ is in $D$.
If $\frac{d }{d\partial_{i}}(\bpartial_\bxi^{\balpha_{j}})$ is not zero, then it is the leading term of
$\frac{d }{d\partial_{i}}(\Lambda_{j})$, since the monomial ordering is compatible with the
multiplication by a variable. This shows that $D$ is stable by division by the variable $\partial_{i}$
and that $B$ is connected to $1$. This ends the proof of the lemma.  
\end{proof}
 
Such a basis of $\DDD$ can be obtained from any other basis of $\DDD$ by transforming first the coefficient matrix of the given dual basis into row echelon form and then reducing the elements above the pivot coefficients.
The integration method described in \cite{mantzaflaris:inria-00556021} computes a primal-dual pair,
such that the coefficient matrix has a block row-echelon form, each block being associated to an order.
The computation of a basis as in Lemma \ref{pdlemma} can be then performed order by order.

\begin{example} Let 
$$f_1=x_1-x_2+x_1^2, f_2= x_1-x_2+x_1^2,
$$
which has a multiplicity $3$ root at $\bxi=(0,0)$.  The integration method described in \cite{mantzaflaris:inria-00556021} computes a primal-dual pair 
$$
\tilde{B}=\left\{1,x_1,x_2\right\}, \; \tilde{\bLambda}=\left\{1, \partial_1+\partial_2, \partial_2+\frac{1}{2}\partial_1^2+ \partial_1\partial_2+\frac{1}{2}\partial_1^2\right\}.
$$
This primal dual pair does not form an orthogonal pair, since $( \partial_1+\partial_2)(x_2)\neq 0$. However, using let say the degree lexicographic ordering such that $x_1>x_2$, we easily deduce the primal-dual pair of Lemma \ref{pdlemma}:
$$
{B}=\left\{1,x_1,x_1^2\right\}, \quad \bLambda=\tilde{\bLambda}.
$$
\end{example}

Throughout this section we assume that we are given a fixed primal basis $B$ for $\K[\bx ]/ \QQ$
such that a dual basis $\bLambda$ of $\DDD$ satisfying the properties of Lemma \ref{pdlemma} exists. Note that such a primal basis $B$  can be computed numerically  from an approximation of $\xi$ and using a modification of the integration method of  \cite{mantzaflaris:inria-00556021}.

Given the primal basis $B$, a dual basis can be computed by
Macaulay's dialytic method which can be used to deflate the root $\bxi$ as
in \cite{lvz08}. This method would introduce
\mbox{$n+(\mult-1)
\left({{n+\nil}\choose{n}}-\mult\right)$} new variables, which is not polynomial in $\nil$. Below, we give a construction of a 
polynomial system that only depends on at most 
$n+ n\mult(\mult-1)/2$ variables. These variables 
correspond to the entries of the {\em multiplication matrices} that we~define~next.
Let 
\begin{eqnarray*}
M_{i} : \K[\bx]/Q&\rightarrow &  \K[\bx]/Q\\
  p & \mapsto & (x_{i}-\xi_{i})\, p
\end{eqnarray*} 
be the multiplication operator by $x_{i}-\xi_{i}$ in 
$\K[\bx]/Q$. Its transpose operator is
\begin{eqnarray*}
M_{i}^{t} : \DDD&\rightarrow &  \DDD\\
  \Lambda & \mapsto & \Lambda \circ M_{i}= (x_{i}-\xi_{i})\cdot \Lambda = \frac{d}{d\partial_{i,\xi}} (\Lambda)=d_{\partial_{i, \xi}}(\Lambda)
\end{eqnarray*} 
where $\DDD= Q^{\perp}\subset \K[\bpartial]$. The matrix of
$M_{i}$ in the basis $B$ of  $\K[\bx]/Q$ is denoted $\mM_{i}$.

As $B$ is a basis of $\K[\bx]/Q$, we can identify the elements of
$\K[\bx]/Q$ with the elements of the vector space $\sp_\K( B)$. 
We define the normal form $N(p)$ of a polynomial $p$ in $\K[\bx]$ as the
unique element $b$ of ${\rm span}_\K(B)$ such
that $p-b\in Q$. Hereafter, we are going to identify the elements of
$\K[\bx]/Q$ with their normal form in $\sp_\K (B )$.

For any polynomial $p (x_{1}, \ldots, x_{n}) \in \K[\bx]$, let $p
(\bM)$ be the operator of $\K[\bx]/Q$ obtained by replacing $x_{i}-\xi_{i}$ by $M_{i}$.

\begin{lemma}
For any $p\in \K[\bx]$, the normal form of $p$ is $N (p)= p (\bM) (1)$ and we
have
$$ 
p (\bM) (1) = \Lambda_{0}(p)\, 1 + \Lambda_{1}(p) \, ( \bx-\bxi)^{\balpha_1}+\cdots + \Lambda_{{\mult-1}}(p)\, ( \bx-\bxi)^{\balpha_{\mult-1}}.
$$
\end{lemma}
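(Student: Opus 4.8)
The plan is to establish the two asserted equalities in turn: first $N(p)=p(\bM)(1)$, then the expansion of $p(\bM)(1)$ against the primal-dual pair of Lemma~\ref{pdlemma}. For the first, I would start from the compatibility of the normal form with the multiplication operators: for every $q\in\K[\bx]$ and every $i$,
$$
N\bigl((x_{i}-\xi_{i})\,q\bigr)=M_{i}\bigl(N(q)\bigr),
$$
which is immediate, since $q-N(q)\in Q$ gives $(x_{i}-\xi_{i})\,q\equiv (x_{i}-\xi_{i})\,N(q)\pmod Q$, and $M_{i}$ is by definition the operator ``multiply by $x_{i}-\xi_{i}$'' on $\K[\bx]/Q$, identified with $\sp_\K(B)$ via $N$. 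Because $\K[\bx]/Q$ is a commutative ring, the operators $M_{1},\dots,M_{n}$ pairwise commute, so $p(\bM)$, obtained by writing $p$ as a polynomial in $x_{1}-\xi_{1},\dots,x_{n}-\xi_{n}$ and substituting $M_{i}$ for $x_{i}-\xi_{i}$, is well defined. Writing $p=\sum_{\gamma}c_{\gamma}(\bx-\bxi)^{\gamma}$ with $c_{\gamma}\in\K$ (a finite sum) and iterating the displayed identity gives $N\bigl((\bx-\bxi)^{\gamma}\bigr)=\bM^{\gamma}(1)$; the $\K$-linearity of $N$ then yields $N(p)=\sum_{\gamma}c_{\gamma}\,\bM^{\gamma}(1)=p(\bM)(1)$.

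For the second equality, since $p(\bM)(1)=N(p)\in\sp_\K(B)$ I would write $N(p)=\sum_{j}\mu_{j}\,(\bx-\bxi)^{\balpha_{j}}$ with uniquely determined $\mu_{j}\in\K$ and recover the coefficients from the orthogonality of the primal-dual pair of Lemma~\ref{pdlemma}: since $\Lambda_{i}\bigl((\bx-\bxi)^{\balpha_{j}}\bigr)$ equals $1$ if $i=j$ and $0$ otherwise, applying $\Lambda_{i}$ to the expansion gives $\mu_{i}=\Lambda_{i}(N(p))$. Finally, $\Lambda_{i}\in\DDD=Q^{\perp}$, where the identification $\DDD=Q^{\perp}$ is Lemma~\ref{lem:primcomp}, so $\Lambda_{i}$ vanishes on $Q$; since $p-N(p)\in Q$, this gives $\Lambda_{i}(p)=\Lambda_{i}(N(p))=\mu_{i}$. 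Substituting $\mu_{j}=\Lambda_{j}(p)$ into $N(p)=p(\bM)(1)$ produces the claimed formula.

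The whole argument is bookkeeping, so I do not anticipate a genuine obstacle. The only points that need care are the well-definedness of the substitution $p(\bM)$, which rests on the $M_{i}$ commuting, and the normal-form/multiplication compatibility $N((x_{i}-\xi_{i})\,q)=M_{i}\,N(q)$; once these are in place, everything else follows from $\K$-linearity and the orthogonality relations.
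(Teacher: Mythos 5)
The paper states this lemma without proof (it is presented as a standard fact about multiplication operators and orthogonal primal--dual pairs), so there is no argument of the authors' to compare against. Your proof is correct and complete: the identity $N((x_i-\xi_i)q)=M_i(N(q))$ follows exactly as you say from $q-N(q)\in Q$, the commutativity of the $M_i$ makes $p(\bM)$ well defined, and the second equality is precisely the combination of the duality relation $\Lambda_i((\bx-\bxi)^{\balpha_j})=\delta_{ij}$ from Lemma~\ref{pdlemma} with the fact that each $\Lambda_i$ lies in $\DDD=Q^{\perp}$ and hence cannot distinguish $p$ from $N(p)$. This is the natural verification the authors intended the reader to supply.
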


This shows that the coefficient vector $[p]$ of $N (p)$ in the basis $B$ of
 is $[p]= (\Lambda_{{i}}(p))_{0\le i \le \mult-1}$.
 
The  following lemma is also well known, but we include it here with proof:
\begin{lemma}\label{multlemma} Let $B$ as in (\ref{pbasis}) and denote the exponents in $B$ by 
$
E:=\{\alpha_1, \ldots, \alpha_\mult\}
$ as above.
Let  
$$E^+:=\bigcup_{i=1}^n (E+ \e_i )$$ 
with $E+\e_i=\{(\gamma_1, \ldots , \gamma_i+1, \ldots, \gamma_n):\gamma\in E\}$ and 
we denote $\partial(E)= E^{+}\setminus E$. 
The values of the coefficients $\nu_{\alpha, \beta}$
for $(\alpha,\beta)\in E\times \partial(E)$ appearing in the dual basis (\ref{Macbasis})
uniquely determine the system of pairwise commuting multiplication  matrices $\mM_{i}$, namely,  
for $i=1, \ldots, n$ 
\begin{eqnarray}\label{multmat}
\mM_{i}^{t}=
\begin{array}{|ccccc|}
\cline{1-5}
 0&\nu_{\balpha_1, \e_i}&\nu_{\balpha_2, \e_i}&\cdots &\nu_{\balpha_{d-1}, \e_i}  \\ 
 0&0&\nu_{\balpha_2,\balpha_1+\e_i}&\cdots &\nu_{\balpha_{d-1},\balpha_1+\e_i}\\  
 \vdots &\vdots &&&\vdots\\
0&0&0&\cdots &\nu_{\balpha_{d-1},\balpha_{d-2}+\e_i}\\ 
0&0&0&\cdots &0\\
\cline{1-5} 
\end{array}
\end{eqnarray} 
Moreover, 
$$
\nu_{\alpha_i, \alpha_k+\e_j}=\begin{cases} 1 & \text{ if } \alpha_i= \alpha_k+\e_j\\
0 & \text{ if } \alpha_k+\e_j \in E, \; \alpha_i \neq \alpha_k+\e_j .
\end{cases}
$$
\end{lemma}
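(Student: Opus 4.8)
The plan is to work directly from the structure of the dual basis $\bLambda$ in \eqref{Macbasis} and from the identification, established in the preceding lemma, that the coefficient vector of the normal form $N(p)$ in the basis $B$ is $[p] = (\Lambda_j(p))_{0\le j\le \mult-1}$. The matrix $\mM_i$ of $M_i$ is, column by column, the list of coefficient vectors $[(x_i-\xi_i)(\bx-\xi)^{\balpha_k}] = ((\bx-\xi)^{\balpha_k+\e_i})$ expressed in $B$; equivalently, the $(j,k)$ entry of $\mM_i$ is $\Lambda_j\bigl((\bx-\xi)^{\balpha_k+\e_i}\bigr)$. Dually, and more convenient here, the transpose $\mM_i^t$ is the matrix of $d_{\partial_{i,\xi}}$ acting on $\DDD$ in the basis $\bLambda$, so the $(j,k)$ entry of $\mM_i^t$ is the coefficient of $\Lambda_j$ in the expansion of $d_{\partial_{i,\xi}}(\Lambda_k) = \tfrac{d}{d\partial_{i,\xi}}(\Lambda_k)$ along $\bLambda$. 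So the first step is to compute $d_{\partial_{i,\xi}}(\Lambda_k)$ from the explicit form \eqref{Macbasis} and read off its coordinates.

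First I would apply $\tfrac{d}{d\partial_{i,\xi}}$ to $\Lambda_k = \tfrac{1}{\balpha_{k}!}\bpartial_\bxi^{\balpha_{k}} + \sum_{\bbeta\notin E}\nu_{\balpha_k,\bbeta}\,\bpartial_\bxi^{\bbeta}$. Since $\tfrac{d}{d\partial_{i,\xi}}$ lowers the exponent in $\partial_i$ by one, the leading term becomes $\tfrac{1}{(\balpha_k-\e_i)!}\bpartial_\bxi^{\balpha_k-\e_i}$ (with the usual convention that it vanishes if $\balpha_k$ has zero $i$-th coordinate), and the tail maps into lower-order terms supported outside $E$, plus possibly terms whose exponents land back in $E$. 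The key point, which uses that $\DDD$ is stable under derivation (Lemma on orthogonal primal--dual bases and the discussion of $\DDD_t$), is that $d_{\partial_{i,\xi}}(\Lambda_k)$ must again lie in $\DDD = \operatorname{span}\bLambda$, hence equals a unique linear combination $\sum_{j}c_{jk}^{(i)}\Lambda_j$. To find the $c_{jk}^{(i)}$ I would apply the functional $d_{\partial_{i,\xi}}(\Lambda_k)$ to each basis element $(\bx-\xi)^{\balpha_j}$ of $B$ and use orthogonality $\Lambda_j((\bx-\xi)^{\balpha_l}) = \delta_{jl}$: concretely, $c_{jk}^{(i)} = \bigl(d_{\partial_{i,\xi}}\Lambda_k\bigr)\!\bigl((\bx-\xi)^{\balpha_j}\bigr) = \Lambda_k\bigl((x_i-\xi_i)(\bx-\xi)^{\balpha_j}\bigr) = \Lambda_k\bigl((\bx-\xi)^{\balpha_j+\e_i}\bigr)$, using the adjointness $(x_i-\xi_i)\cdot\Lambda = d_{\partial_{i,\xi}}(\Lambda)$ recorded in the Preliminaries. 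Reading off the coefficient of $\bpartial_\bxi^{\balpha_j+\e_i}$ in $\Lambda_k$ from \eqref{Macbasis}: if $\balpha_j+\e_i = \balpha_k$ this is $\tfrac{1}{\balpha_k!}$ times the appropriate multinomial factor, which one checks reduces to $1$; if $\balpha_j+\e_i\notin E$ it is $\nu_{\balpha_k,\balpha_j+\e_i}$; and if $\balpha_j+\e_i\in E$ but $\neq\balpha_k$ it is $0$ because in \eqref{Macbasis} the summation runs over $\bbeta\notin E$. This simultaneously establishes the strictly-upper-triangular pattern claimed in \eqref{multmat} — using $0=\ord(\Lambda_0)\le\cdots\le\ord(\Lambda_{\mult-1})$ so that $d_{\partial_{i,\xi}}(\Lambda_k)$ has strictly smaller order than $\Lambda_k$, hence involves only $\Lambda_j$ with $j<k$ — and proves the displayed case distinction for $\nu_{\alpha_i,\alpha_k+\e_j}$.

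The remaining assertion is that the matrices $\mM_i$ so obtained pairwise commute; this is automatic, not something extra to prove, since the operators $M_i,M_j$ on the commutative ring $\K[\bx]/Q$ commute (they are multiplications by $x_i-\xi_i$ and $x_j-\xi_j$), so their matrices in any fixed basis commute. Finally, uniqueness: the lemma asserts the $\nu_{\alpha,\beta}$ with $(\alpha,\beta)\in E\times\partial(E)$ already determine all the $\mM_i$. Here I would note that $\partial(E) = E^+\setminus E$ is exactly the set of exponents $\balpha_j+\e_i$ that do \emph{not} lie in $E$, so every entry of every $\mM_i^t$ appearing in \eqref{multmat} is either $0$, $1$, or one of these $\nu_{\balpha_k,\balpha_j+\e_i}$ with $\balpha_j+\e_i\in\partial(E)$; hence the full collection of multiplication matrices is a function of precisely that finite block of dual-basis coefficients. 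The main obstacle — really the only place demanding care — is the bookkeeping in the leading-term computation: verifying that the multinomial normalization $\tfrac{1}{\balpha!}$ in \eqref{Macbasis} is exactly the one that makes the $\balpha_j+\e_i=\balpha_k$ entry equal to $1$ rather than some binomial coefficient, and correctly handling the edge case where $\balpha_k$ has $i$-th coordinate zero (so $d_{\partial_{i,\xi}}$ kills the leading term and the $k$-th column of $\mM_i^t$ has no "$1$"). Everything else follows mechanically from orthogonality and derivation-stability of $\DDD$.
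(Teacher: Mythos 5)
Your proposal is correct and follows essentially the same route as the paper: both compute the $(j,k)$ entry of $\mM_i^t$ as $\Lambda_k\bigl((\bx-\xi)^{\balpha_j+\e_i}\bigr)$ via the adjointness of multiplication by $(x_i-\xi_i)$ with $d_{\partial_{i,\xi}}$ and the orthogonality $\Lambda_j((\bx-\xi)^{\balpha_l})=\delta_{jl}$, then read off $\nu_{\balpha_k,\balpha_j+\e_i}$ (or $0$, or $1$) from \eqref{Macbasis}, with the strict upper-triangularity coming from the order statement in Lemma \ref{pdlemma}. Your added remarks on commutativity being automatic and on the factorial normalization are consistent with (and slightly more careful than) the paper's argument.
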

\begin{proof}
As $M_{i}^{t}$ acts as a derivation on $\DDD$ and $\DDD$ is closed under derivation,  so the third property in Lemma \ref{pdlemma} implies that the matrix
of $M_{i}^{t}$ in the basis of $\bLambda$ of $\DDD$ has an upper triangular form with
zero (blocks) on the diagonal.

For an element $\Lambda_{{j}}$ of order $k$, its image by
$M_{i}^{t}$ is 
{\small \begin{eqnarray*}
&&M_{i}^{t} (\Lambda_{{j}}) =
(x_{i} - \xi_{i}) \cdot \Lambda_{{j}}\\
&&=\sum_{|\balpha_{l}|<k} \Lambda_{{j}} ((x_{i}-\xi_{i})
(\bx-\bxi)^{\balpha_{l}}) \Lambda_{{l}}\\
 &&=  \sum_{|\balpha_{l}|<k} \Lambda_{{j}}
((\bx-\bxi)^{\balpha_{l}+\e_{i}}) \, \Lambda_{{l}}
 =  \sum_{|\balpha_{l}|<k} 
\nu_{\balpha_{j}, \balpha_l+ \e_i} \Lambda_{{l}}.
\end{eqnarray*} }
This shows that the entries of $\mM_{i}$ are the coefficients of the
dual basis elements corresponding to exponents in $E\times \partial(E)$. The second claim is clear from the definition of $\mM_{i}$.
\end{proof}

The previous lemma shows that the dual basis uniquely defines the system of multiplication matrices for $  i=1, \ldots, n$
{\small 
\begin{eqnarray*}
\mM_{i}^{t}&=&\begin{array}{|ccc|}
\cline{1-3}
 \Lambda_{{0}}(x_i-\xi_i)&\cdots &\Lambda_{{\mult-1}}(x_i-\xi_i)  \\ 
 \Lambda_{{0}}\left(( \bx-\bxi)^{\balpha_1+\e_i}\right)&\cdots &\Lambda_{{\mult-1}}\left(( \bx-\bxi)^{\balpha_1+\e_i}\right) \\  
 \vdots &&\vdots\\
 \Lambda_{{0}}\left(( \bx-\bxi)^{\balpha_d+\e_i}\right)&\cdots &\Lambda_{{\mult-1}}\left(( \bx-\bxi)^{\balpha_\mult+\e_i}\right) \\  
\cline{1-3} 
\end{array}\nonumber\\
&=& 
\begin{array}{|ccccc|}
\cline{1-5}
  0&\nu_{\balpha_1, \e_i}&\nu_{\balpha_2, \e_i}&\cdots &\nu_{\balpha_{\mult-1}, \e_i}  \\ 
 0&0&\nu_{\balpha_2,\balpha_1+\e_i}&\cdots &\nu_{\balpha_{\mult-1},\balpha_1+\e_i}\\  
 \vdots &\vdots &&&\vdots\\
0&0&0&\cdots &\nu_{\balpha_{\mult-1},\balpha_{\mult-2}+\e_i}\\ 
0&0&0&\cdots &0\\
\cline{1-5} 
\end{array}
\end{eqnarray*}}
Note that these matrices are nilpotent by their upper triangular
structure, and all $0$ eigenvalues. 
As $\nil$ is the maximal order of the elements of $\DDD$, we have
$\mM^{\bgamma}=0$ if $|\bgamma|> \nil$.\\


Conversely, the system of multiplication matrices $\mM_1, \ldots, \mM_n$ uniquely defines the dual basis as follows. Consider $\nu_{\balpha_i,\bgamma}$  for some  $(\balpha_i, \bgamma)$ such that $|\bgamma|\leq \nil$ but $\bgamma \not\in E^+$.  We can uniquely determine $\nu_{\balpha_i,\bgamma}$ from the values of $\{\nu_{\balpha_j, \bbeta}\; : \;(\balpha_j, \bbeta)\in E\times \partial(E)\}$  from the following identities:
\begin{equation}\label{restnu}
\nu_{\balpha_i,\bgamma}= \Lambda_{i}((\bx -\bxi)^{\bgamma})
=[\mM_{(\bx -\xi)^{\bgamma}}]_{1, i}= [\mM^{\bgamma}]_{1,i}.
\end{equation}

The next definition defines the {\em parametric multiplication matrices} that we use  in our constriction.

\begin{definition}[Parametric multiplication matrices] Let $B$ as in (\ref{pbasis}),  and  $E$, $\partial(E)$ as in Lemma \ref{multlemma}.  We define array  ${\bmu}$ of length $n\mult(\mult-1)/2$ consisting of $0$'s, $1$'s and the variables 
  $\mu_{\alpha_i, \beta}$
  as follows: for all $\alpha_i, \alpha_k\in E$ and $j\in \{1, \ldots, n\}$ the corresponding entry is 
\begin{eqnarray}\label{defmuE}
{\bmu}_{\alpha_i, \alpha_k+\e_j}=\begin{cases} 1 & \text{ if } \alpha_i= \alpha_k+\e_j\\
0 & \text{ if } \alpha_k+\e_j \in E, \; \alpha_i \neq \alpha_k+\e_j \\
\mu_{\alpha_i, \alpha_k+\e_j} & \text{ if  } \alpha_k+\e_j \not\in E.
\end{cases}
\end{eqnarray}
The {\em parametric multiplication matrices} are defined  
for $i=1, \ldots, n$ by
\begin{equation}\label{Mmu}
\mM_{i}^{t}({\bmu}):= \begin{array}{|ccccc|}
\cline{1-5}
  0&\bmu_{\balpha_1, \e_i}&\bmu_{\balpha_2, \e_i}&\cdots &\bmu_{\balpha_{\mult-1}, \e_i}  \\ 
 0&0&\bmu_{\balpha_2,\balpha_1+\e_i}&\cdots &\bmu_{\balpha_{d-1},\balpha_1+\e_i}\\  
 \vdots &\vdots &&&\vdots\\
0&0&0&\cdots &\bmu_{\balpha_{d-1},\balpha_{\mult-2}+\e_i}\\ 
0&0&0&\cdots &0\\
\cline{1-5} 
\end{array} ,
\end{equation}
We denote by 
$$
\mM(\bmu)^\bgamma:=\mM_1(\bmu)^{\gamma_1}\cdots\mM_n(\bmu)^{\gamma_n},
$$
and note  that for general parameters values $\bmu$, the matrices  $\mM_i(\mu)$ do not commute, so we fix their order by their indices in the above definition of  $\mM(\mu)^\bgamma$. 
\end{definition}

\begin{remark} \label{reduce} Note that we can reduce the number of free parameters in the parametric multiplication matrices by exploiting the commutation rules of the multiplication matrices corresponding to a given  primal basis $B$. For example, consider the breadth one case, where we can assume that $E=\{{\bf 0}, \e_1, 2\e_1, \ldots, (\delta-1)\e_1\}$. In this case the only free parameters appear in the first columns of $\mM_2(\mu), \ldots, \mM_n(\mu)$,  the other columns are shifts of these.   Thus, it is enough to introduce $ (n-1)(\delta-1)$ free parameters,  similarly as in \cite{LiZhi2013}. In Section \ref{Sec:Examples} we present a modification of \cite[Example 3.1]{LiZhi2013} which has breadth two, but also uses at 
most $ (n-1)(\delta-1)$ free parameters.
\end{remark}

\begin{definition}[Parametric normal form] \label{parnorm} Let $\KK\subset \K$ be a field. We define
\begin{eqnarray*}
\Nc_{\bz,\bmu} : \KK[\bx] & \rightarrow & \KK[\bz,\bmu]^{\mult}\\
p& \mapsto& \Nc_{\bz,\bmu} (p) :=  \sum_{\bgamma\in \N^n}
\frac{1}{\bgamma!} \bpartial_{\bz}^{\bgamma}(p) \, \mM({\bmu})^{\bgamma}[1].
\end{eqnarray*}
where $[1]=[1,0,\ldots,0]$ is the coefficient vector of $1$ in the basis $B$. This sum is finite since for $|\bgamma|\geq \mult$, 
$\mM({\bmu})^{\bgamma}=0$, so the entries of $\Nc_{\bz,\bmu} (p)$ are polynomials in ${\bmu}$ and $\bz$.

\end{definition}

Notice that this notation is not ambiguous, assuming that the matrices $\mM_{i}(\mu)$
($i=1,\ldots,n$)  are commuting.
The specialization at $(\bx,\bmu)= (\bxi,\bnu)$ is the vector
$$ 
\Nc_{\bxi,\bnu} (p) =[\Lambda_{{0}} (p),
\ldots, \Lambda_{{\mult-1}} (p)]^{t}\in \K^{\mult}.
$$

\subsection{The multiplicity structure equations of a singular point}

\noindent We can now characterize the multiplicity structure by
polynomial equations.
\begin{theorem} \label{theorem1}
Let $\KK\subset \C$ be any field, $\f\in\KK[\bx]^N$ and let $\bxi\in
\C^n$ be an isolated solution of $\f$.  Let $\mM_i(\bmu)$ for $i=1, \ldots n$ be the parametric multiplication matrices as in~(\ref{Mmu}) and~$\Nc_{\bxi,\bmu}$ be the parametric normal form as in Defn.~\ref{parnorm} at $\bz=\bxi$.  
Then the ideal $J_{\bxi}$ of $\C[\bmu]$ generated by the polynomial system 
{\small\begin{eqnarray}\label{matrixeq}
\begin{cases} 
\Nc_{\bxi,\bmu} (f_{k})\;\; \text{ for } k=1, \ldots, N, \\
\mM_{i}({\bmu})\cdot \mM_{j}({\bmu})-\mM_{i}({\bmu})\cdot \mM_{i}({\bmu})\;\;\text{ for } i, j=1, \ldots, n
\end{cases}
\end{eqnarray}}
is the maximal ideal
$$
\m_{\nu}= (\bmu_{\balpha, \bbeta}- \nu_{\balpha, \bbeta}, (\balpha,\bbeta)\in E\times \partial(E))
$$ 
where $\nu_{\balpha, \bbeta}$ are the coefficients of the dual basis defined in~(\ref{Macbasis}). 
\end{theorem}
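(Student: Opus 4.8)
The plan is to show two inclusions: first that $\m_\nu \supseteq J_\bxi$ (every generator of $J_\bxi$ vanishes at $\bmu = \nu$), and second that $J_\bxi$ is maximal, hence $J_\bxi = \m_\nu$. For the first inclusion, recall from Lemma~\ref{multlemma} that the true coefficients $\nu_{\balpha,\bbeta}$ with $(\balpha,\bbeta) \in E \times \partial(E)$ are exactly the entries of the multiplication matrices $\mM_i$, which pairwise commute and satisfy $\mM_i(\nu) = \mM_i$; thus the commutator equations $\mM_i(\bmu)\mM_j(\bmu) - \mM_j(\bmu)\mM_i(\bmu)$ vanish at $\bmu = \nu$. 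Moreover, by the Lemma preceding Lemma~\ref{multlemma} (the normal form identity $N(p) = p(\bM)(1)$) together with the definition of $\Nc_{\bz,\bmu}$ via the Leibniz/Taylor expansion, the specialization $\Nc_{\bxi,\nu}(f_k) = [\Lambda_0(f_k), \ldots, \Lambda_{\mult-1}(f_k)]^t$; since each $\Lambda_j \in \DDD \subset I^\perp$ and $f_k \in I$, this vector is zero. Hence all generators in~(\ref{matrixeq}) lie in $\m_\nu$, giving $J_\bxi \subseteq \m_\nu$.

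The substance of the proof is the reverse inclusion, i.e.\ showing $\m_\nu \subseteq J_\bxi$, equivalently that $\bmu = \nu$ is the \emph{only} common solution of~(\ref{matrixeq}) and that it occurs with multiplicity one (so that $J_\bxi$ is actually the full maximal ideal, not a smaller primary ideal). I would argue as follows. Suppose $\bmu = \tilde\nu$ is any solution of~(\ref{matrixeq}). The commutator equations force the matrices $\mM_i(\tilde\nu)$ to pairwise commute, so they define an algebra homomorphism $\C[\bx] \to \C^{\mult\times\mult}$ sending $x_i - \xi_i \mapsto \mM_i(\tilde\nu)$; since these matrices are nilpotent and upper-triangular by construction~(\ref{Mmu}), the map $p \mapsto \Nc_{\bxi,\tilde\nu}(p) = \sum_\bgamma \frac{1}{\bgamma!}\bpartial^\bgamma(p)\,\mM(\tilde\nu)^\bgamma[1]$ is well-defined and is precisely evaluation of the normal form in the $\C[\bx]$-module structure determined by these matrices. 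The equations $\Nc_{\bxi,\tilde\nu}(f_k) = 0$ then say that each $f_k$ maps to the zero class, so $I$ is contained in the kernel of this homomorphism; the kernel is an $\m_\xi$-primary ideal $\tilde Q$ with $\dim \C[\bx]/\tilde Q \le \mult$. Combined with $\tilde Q \supseteq I \supseteq Q$ and $\dim \C[\bx]/Q = \mult$, we get $\tilde Q = Q$, and then the dual space of $\C[\bx]/\tilde Q$ in the basis dual to $B$ must coincide with $\bLambda$. Reading off the entries of $\mM_i(\tilde\nu)$ as in Lemma~\ref{multlemma} forces $\tilde\nu_{\balpha,\bbeta} = \nu_{\balpha,\bbeta}$ for all $(\balpha,\bbeta) \in E\times\partial(E)$, i.e.\ $\tilde\nu = \nu$. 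This shows $V(J_\bxi) = \{\nu\}$, hence $\sqrt{J_\bxi} = \m_\nu$.

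It remains to upgrade $\sqrt{J_\bxi} = \m_\nu$ to $J_\bxi = \m_\nu$, i.e.\ to show the Jacobian of the system~(\ref{matrixeq}) with respect to $\bmu$ has full rank $n\mult(\mult-1)/2$ at $\bmu = \nu$. Here I would use the linear part of the equations $\Nc_{\bxi,\bmu}(f_k)$ and the commutators in the perturbation $\bmu = \nu + \epsilon$: writing $\mM_i(\bmu) = \mM_i + \epsilon\,\delta\mM_i$, the first-order terms of the commutator equations say $[\delta\mM_i, \mM_j] + [\mM_i, \delta\mM_j] = 0$, i.e.\ $(\delta\mM_1,\ldots,\delta\mM_n)$ is a cocycle for the action of the $\mM_i$; and the first-order terms of $\Nc_{\bxi,\bmu}(f_k)$ being zero says this cocycle, applied to the $f_k$, produces a coboundary-type obstruction. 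The claim is that the only such infinitesimal deformation that preserves both commutativity and the vanishing on $I$ is the trivial one, which is exactly the statement that the deformation $Q \rightsquigarrow \tilde Q$ is rigid — and this follows because $Q$ is a fixed complete-intersection-free but \emph{isolated} primary component, so its Hilbert function and hence its dual space in the fixed basis $B$ cannot be deformed. I expect this infinitesimal rigidity argument — making precise that the tangent space to the solution variety at $\nu$ is zero — to be the main obstacle, and the cleanest route is probably to exhibit, for each coordinate $\mu_{\balpha,\bbeta}$, an explicit equation among the generators whose linearization at $\nu$ recovers $\mu_{\balpha,\bbeta} - \nu_{\balpha,\bbeta}$ up to already-controlled coordinates, using the ordering of $E$ by total degree so that the dependence is triangular.
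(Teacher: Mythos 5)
Your first inclusion ($J_\bxi \subseteq \m_\nu$) is fine and matches the paper. Your set-theoretic argument for the converse (any solution $\tilde\nu$ of (\ref{matrixeq}) yields commuting nilpotent matrices, hence an $\m_\xi$-primary ideal $\tilde Q \supseteq I$ of colength $\le \mult$, hence $\tilde Q = Q$ and $\tilde\nu = \nu$) is essentially sound modulo two repairable slips: the chain should be $\tilde Q \supseteq Q \supseteq I$, and to get $\tilde Q\supseteq Q$ from $\tilde Q\supseteq I$ you need the localization step (for $q\in Q$ pick $p$ with $p(\bxi)\ne 0$, $pq\in I$, and use that $p(\bxi+\mM(\bmu))$ is invertible since it is identity-plus-nilpotent); you also need the surjectivity statement $\Phi((\bx-\bxi)^{\alpha_k})=\e_k$, proved by induction on $|\alpha_k|$ using connectedness to $1$ and the hard-coded $0/1$ entries in (\ref{defmuE}), before you can ``read off'' the entries of $\mM_i(\tilde\nu)$ as structure constants of $\C[\bx]/Q$ in the basis $B$. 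But, as you acknowledge, all of this only yields $\sqrt{J_\bxi}=\m_\nu$.

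The genuine gap is the upgrade to $J_\bxi=\m_\nu$. Your proposed route --- infinitesimal rigidity via cocycles, justified by the claim that the Hilbert function of $Q$ ``cannot be deformed'' --- is not a proof, and its justification is false in general: zero-dimensional primary ideals are typically not rigid (tangent spaces to Hilbert schemes of points are large), so some specific feature of these equations must be exploited. The paper avoids any Jacobian or rigidity computation by running the whole argument scheme-theoretically. Set $C=\C[\bmu]/J_\bxi$ and let $\Phi:C[\bx]\to C^{\mult}$, $p\mapsto \Nc_{\bxi,\bmu}(p)$. Because the commutators lie in $J_\bxi$, the kernel $K$ of $\Phi$ is an ideal of $C[\bx]$ containing $I$ and hence $Q$; because the entries of $\mM_i(\bmu)$ in positions indexed by $E$ are the constants $0$ and $1$, the identity $\Phi((\bx-\bxi)^{\alpha_k})=\e_k$ holds \emph{exactly over $C$}, not merely modulo nilpotents. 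Computing $\Phi((x_i-\xi_i)(\bx-\bxi)^{\alpha_k})$ two ways --- once as $\mM_i(\bmu)\,\e_k$, once by writing $(x_i-\xi_i)(\bx-\bxi)^{\alpha_k}=M_i(\nu)((\bx-\bxi)^{\alpha_k})+q$ with $q\in Q\subseteq K$ --- gives $\bmu_{\alpha,\beta}-\nu_{\alpha,\beta}\in J_\bxi$ directly as an ideal membership. You would need either to adopt this quotient-ring argument or to supply an actual full-rank computation for the Jacobian of (\ref{matrixeq}) in the $\bmu$ variables at $\nu$; your current sketch does neither.
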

\begin{proof}
 As before, the system \eqref{matrixeq} has a solution
$\bmu_{\balpha,\bbeta}=\nu_{\balpha,\bbeta}$ for $(\balpha,\bbeta)\in
E\times \partial(E)$. Thus $J_{\bxi}\subset \m_{\nu}$.

Conversely, let $C=\K[\mu]/J_{\bxi}$ and consider the map 
$$ 
 \Phi: C[\bx] \rightarrow C^{\mult}, \;\;
p  \mapsto  \Nc_{\bxi ,\bmu}(p).
$$
Let $K$ be its kernel.
Since the matrices $\mM_{i} (\bmu)$ are commuting
modulo $J_{\bxi}$, we can see that  $K$ is an ideal. 
As $f_{k}\in K$, we have $\I:=(f_{k}) \subset K$. 

Next we show that $Q\subset K$.
By construction, for any $\alpha\in \N^{n}$ we have modulo $J_\xi$
\begin{equation*}\label{eq:rel1}
\Nc_{\bxi ,\bmu}((\bx-\bxi)^{\alpha})= \sum_{\bgamma\in \N^n}
\frac{1}{\bgamma!} \bpartial_{\bxi}^{\bgamma}((\bx-\bxi)^{\alpha}) \, \mM({\bmu})^{\bgamma}[1] =  \mM({\bmu})^{\alpha}[1].
\end{equation*}
Using the previous relation, we check that $\forall p,q \in C[\bx]$, 
\begin{equation}\label{eq:prod}
\Phi(p q) = p(\bxi+\mM (\bmu)) \Phi(q)
\end{equation}
where $ p(\bxi+\mM (\bmu))$ is obtained by replacing $x_{i}-\xi_{i}$ by $\mM_{i} (\bmu)$.
Let $q\in Q$. As $Q$ is the $\m_{\bxi}$-primary component of $\I$,
there exists $p\in \C[\bx]$ such that $p (\bxi)\neq 0$ and $p\, q\in
\I$. By~\eqref{eq:prod},~we~have 
$$ 
\Phi (p\, q)= p(\bxi+\mM (\bmu)) \Phi (q) = 0.
$$
Since $p (\bxi)\neq 0$ and $ p(\bxi+\mM (\bmu))= p (\bxi) Id + N$ with $N$
lower triangular and nilpotent, $p(\bxi+\mM (\bmu))$ is invertible.
We deduce that $\Phi(q)= p (\bxi+\mM (\bmu))^{-1}\Phi (pq) =0$ and $q \in K$.

Let us show now that $\Phi$ is surjective and more precisely, that
$\Phi ((\bx-\bxi)^{\alpha_{k}})=\e_{k}$  (abusing the notation as here
$\e_k$ has length $\mult$ not $n$). Since $B$ is connected to $1$,
either $\alpha_k=0$ or there exists $\alpha_j\in E$ such that
$\alpha_k=\alpha_j+\e_i$ for some $i\in \{1, \ldots, n\}$. Thus the
$j^{\rm th}$ column of $\mM_i({\bmu})$ is $\e_k$ by (\ref{defmuE}).  As
$\{\mM_{i}({\bmu}): i=1, \ldots, n\}$ are  pairwise commuting, we have
$\mM({\bmu})^{\alpha_k}=\mM_j(\bmu) \mM({\bmu})^{\alpha_j}$, and if we
assume by induction on $|\alpha_{j}|$ that the first column of $\mM({\bmu})^{\alpha_j}$
is $\e_j$, we obtain \mbox{$\mM({\bmu})^{\alpha_k}[1]=\e_k$}. 
Thus, for $k = 1,\dots,\mult$,  
$\Phi ((\bx-\bxi)^{\alpha_{k}})=\e_{k}$.

We can now prove that $\m_{\nu}\subset J_{\bxi}$. As $M_{i}(\nu)$ is the multiplication by $(x_{i}-\xi_{i})$
in $\C[\bx]/Q$, for any $b\in B$ and $i=1,\ldots,n$, we have
$(x_{i}-\xi_{i})\, b = M_{i}(\nu) (b) + q$ with $q\in Q\subset K$. We deduce that for $k=0,\ldots,\mult-1$,
$$\hbox{\scriptsize 
$\Phi ((x_{i}-\xi_{i})\, (\bx -\bxi)^{\alpha_{k}}) =  \mM_{i}(\bmu) \Phi
((\bx -\bxi)^{\alpha_{k}}) = \mM_{i}(\bmu) (\e_{k}) =
\mM_{i}(\nu) (\e_{k})$.}$$

This shows that $\bmu_{\alpha,\beta}-\nu_{\alpha,\beta}\in J_{\bxi}$
for $(\alpha,\beta) \in E\times\partial(E)$ and that $\m_{\nu}=J_{\bxi}$.
\end{proof}

In the proof of the next theorem  we  need to consider cases when the multiplication matrices do not commute. We introduce the following definition:

\begin{definition}\label{commutator}
Let $\KK\subset \C$ be any  field. Let $\Cc$ be the ideal of $\KK[\bz,\mu]$ generated by entries of the commutation
relations:
$\mM_{i}({\bmu})\cdot \mM_{j}({\bmu})-\mM_{j}({\bmu})\cdot
\mM_{i}({\bmu})=0$, $i,j=1,\ldots,n$. We call $\Cc$ the {\em commutator ideal}.
\end{definition}

\begin{lemma} For any field $\KK\subset \C$, $p\in \KK[\bx]$, and $i=1,\ldots,n$, we have
\begin{equation}
\Nc_{\bz,\bmu} ( x_{i} p) = x_{i} \Nc_{\bz, \bmu} (p) + \mM_{i} (\bmu)\, \Nc_{\bz, \bmu} (p) + O_{i, \bmu}(p). \label{eq:nf}
\end{equation}
where $O_{i, \mu}: \KK[\bx]\rightarrow \KK[\bz, \mu]^{\mult}$ is linear with image in the commutator ideal $\Cc$.
\end{lemma}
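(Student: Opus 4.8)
The plan is to prove \eqref{eq:nf} by expanding both sides using the definition of $\Nc_{\bz,\bmu}$ and the Leibniz rule, and then recognizing the discrepancy between the ``correct'' multiplication-by-$x_i$ formula and what we actually get as a sum of terms each of which manifestly vanishes modulo the commutator ideal $\Cc$. First I would write, by Definition \ref{parnorm},
$$
\Nc_{\bz,\bmu}(x_i p) = \sum_{\bgamma\in\N^n} \frac{1}{\bgamma!}\,\bpartial_{\bz}^{\bgamma}(x_i p)\,\mM(\bmu)^{\bgamma}[1].
$$
Applying the Leibniz rule to $\bpartial^{\bgamma}(x_i p) = x_i\,\bpartial^{\bgamma}(p) + \gamma_i\,\bpartial^{\bgamma-\e_i}(p)$ (the only surviving terms since $x_i$ has degree one), this splits as $x_i \Nc_{\bz,\bmu}(p)$ plus $\sum_{\bgamma} \frac{1}{(\bgamma-\e_i)!}\,\bpartial_{\bz}^{\bgamma-\e_i}(p)\,\mM(\bmu)^{\bgamma}[1]$, where the second sum reindexes (via $\bgamma\mapsto\bgamma+\e_i$) to $\sum_{\bgamma}\frac{1}{\bgamma!}\,\bpartial_{\bz}^{\bgamma}(p)\,\mM(\bmu)^{\bgamma+\e_i}[1]$.

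The key step is then to compare $\mM(\bmu)^{\bgamma+\e_i}[1]$ with $\mM_i(\bmu)\,\mM(\bmu)^{\bgamma}[1]$. If the matrices commuted, these would be equal, giving exactly $\mM_i(\bmu)\,\Nc_{\bz,\bmu}(p)$; but with the fixed ordering convention $\mM(\bmu)^{\bgamma}=\mM_1(\bmu)^{\gamma_1}\cdots\mM_n(\bmu)^{\gamma_n}$, the product $\mM(\bmu)^{\bgamma+\e_i}$ has the extra $\mM_i$ factor inserted in position $i$ rather than at the front. I would define
$$
O_{i,\bmu}(p) := \sum_{\bgamma\in\N^n}\frac{1}{\bgamma!}\,\bpartial_{\bz}^{\bgamma}(p)\,\bigl(\mM(\bmu)^{\bgamma+\e_i} - \mM_i(\bmu)\,\mM(\bmu)^{\bgamma}\bigr)[1],
$$
which is visibly $\KK$-linear in $p$ with values in $\KK[\bz,\bmu]^{\mult}$, and the displayed identity \eqref{eq:nf} follows immediately by collecting the three pieces. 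It remains to check that each matrix difference $\mM(\bmu)^{\bgamma+\e_i} - \mM_i(\bmu)\,\mM(\bmu)^{\bgamma}$ has all entries in the commutator ideal $\Cc$. This is the routine but essential algebraic core: writing $\mM(\bmu)^{\bgamma+\e_i}=\mM_1^{\gamma_1}\cdots\mM_i^{\gamma_i+1}\cdots\mM_n^{\gamma_n}$ and commuting the inserted $\mM_i$ leftward past $\mM_1^{\gamma_1}\cdots\mM_{i-1}^{\gamma_{i-1}}$ one factor at a time, each transposition $\mM_j\mM_i \rightsquigarrow \mM_i\mM_j$ introduces an error $\mM_j\mM_i-\mM_i\mM_j$ sandwiched between monomials in the $\mM_k(\bmu)$, and since $\Cc$ is an ideal of $\KK[\bz,\bmu]$ containing every entry of each such commutator, each such sandwiched error has entries in $\Cc$; summing finitely many of them keeps us in $\Cc$.

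The main obstacle is purely bookkeeping: making the ``commute $\mM_i$ to the front'' argument precise as an identity of matrices with polynomial entries, and confirming that multiplying a matrix whose entries lie in $\Cc$ on the left and right by matrices with entries in $\KK[\bz,\bmu]$ keeps all entries in $\Cc$ (true because $\Cc$ is an ideal). One should also note the sum defining $O_{i,\bmu}(p)$ is finite, since $\mM(\bmu)^{\bgamma}=0$ once $|\bgamma|\geq\mult$ (more precisely once $|\bgamma|>\nil$ when the parameters are specialized, but for the generic/formal parametric matrices nilpotency of the strictly-upper-triangular $\mM_i(\bmu)$ already forces $\mM(\bmu)^{\bgamma}=0$ for $|\bgamma|\geq\mult$), so $O_{i,\bmu}$ indeed maps into $\KK[\bz,\bmu]^{\mult}$. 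Linearity of $O_{i,\bmu}$ in $p$ is clear since $p\mapsto\bpartial_{\bz}^{\bgamma}(p)$ is linear and the matrix coefficients do not depend on $p$.
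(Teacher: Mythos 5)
Your proof is correct and follows essentially the same route as the paper's: expand $\Nc_{\bz,\bmu}(x_ip)$ via the Leibniz rule, reindex the derivative term, and absorb the discrepancy $\mM(\bmu)^{\bgamma+\e_i}-\mM_i(\bmu)\mM(\bmu)^{\bgamma}$ into $O_{i,\bmu}$, whose entries lie in $\Cc$. You merely supply (correctly) the ``commute $\mM_i$ to the front'' bookkeeping that the paper leaves implicit, and your sign convention for $O_{i,\bmu}$ is the opposite of the paper's, which is immaterial.
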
 
\begin{proof}
$\Nc_{\bz, \bmu} (x_{i} p) 
=  
\sum_{\bgamma}
\frac{1}{\bgamma!} \, \partial_{\bz}^{\bgamma} (x_{i} p) \,
  \mM(\bmu)^{\bgamma}[1]
$
  \begin{eqnarray*}
& = &
x_{i} \sum_{\bgamma} \frac{1}{\bgamma!} \partial_{\bz}^{\bgamma} (p) \, \mM({\bmu})^{\bgamma}[1] +
\sum_{\bgamma} 
\frac{1}{\bgamma!} \gamma_{i}\, \partial_{\bz}^{\bgamma-e_{i}} (p) \,
\mM({\mu})^{\bgamma}[1]\\ 
& =&
x_{i} \sum_{\bgamma} \frac{1}{\bgamma!} \partial_{\bz}^{\bgamma} (p)
\, \mM({\bmu})^{\bgamma}[1] 
+ \sum_{\bgamma} \frac{1}{\bgamma!} \partial_{\bz}^{\bgamma} (p) \,
\mM({\bmu})^{\bgamma+e_{i}}[1]\\
& = &
x_{i} \, \Nc_{\bz, \bmu} (p) 
+ \mM_{i}(\bmu) \left ( \sum_{\bgamma} \frac{1}{\bgamma!} \partial_{\bz}^{\bgamma} (p) \,
\mM({\bmu})^{\bgamma}[1] \right) \\
&&~~~~~~+ \sum_{\bgamma} \frac{1}{\bgamma!} \, \partial_{\bz}^{\bgamma} (p) \,
O_{i, \bgamma}({\bmu})[1]  \\
\end{eqnarray*}
where $O_{i, \bgamma}= \mM_{i} (\bmu) \mM (\bmu)^{\bgamma}- \mM (\bmu)
^{\bgamma+e_{i}}$ is a $\mult \times \mult$ matrix with coefficients in $\Cc$.
Therefore, $O_{i, \mu}:p\mapsto\sum_{\bgamma} \frac{1}{\bgamma!} \partial_{\bz}^{\bgamma} (p) \,
O_{i,\bgamma}({\bmu})[1]$ is a linear functional of $p$ with coefficients
in $\Cc$.
\end{proof}

The next theorem proves that the system defined as in~(\ref{matrixeq}) for general $\bz$ has $(\bxi, {\nu})$ as a simple root.

\begin{theorem}
Let $\f\in \KK[\bx]^N$ and $\bxi\in \C^n$ be as above. Let $\mM_i(\bmu)$ for $i=1, \ldots n$ be the parametric multiplication matrices defined in (\ref{Mmu}) and $\Nc_{\bx,\mu}$ be the parametric normal form as in Defn.~\ref{parnorm}.
Then $(\bz, {\mu})=(\bxi, {\nu})$ is an isolated  root with multiplicity one of the polynomial system in  $\KK[\bz, {\mu}]$:
{\small \begin{eqnarray}\label{overdet}
\begin{cases}  
\Nc_{\bz,\bmu} (f_{k}) = 0 \;\text{ for } k=1, \ldots, N,\\
\mM_{i}({\bmu})\cdot \mM_{j}({\bmu})-\mM_{j}({\bmu})\cdot \mM_{i}({\bmu})=0\;
\text{ for } i, j=1, \ldots, n.  \end{cases}
\end{eqnarray}}
\end{theorem}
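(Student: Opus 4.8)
The plan is to show two things: first, that $(\bz,\bmu)=(\bxi,\bnu)$ is a root of the system \eqref{overdet}, and second, that the Jacobian of \eqref{overdet} at this point has full rank, which (together with the fact that the number of equations is at least the number of variables $n + n\mult(\mult-1)/2$) gives that the root is isolated with multiplicity one. The first claim is immediate: the commutation relations hold at $\bmu=\bnu$ by Lemma~\ref{multlemma}, and $\Nc_{\bxi,\bnu}(f_k) = [\Lambda_0(f_k),\ldots,\Lambda_{\mult-1}(f_k)]^t = 0$ since each $\Lambda_j \in \DDD = Q^\perp \subset I^\perp$ and $f_k \in I$. So the work is entirely in the Jacobian rank computation.

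The natural strategy is to split the tangent space into the $\bz$-directions and the $\bmu$-directions and analyze the linearization block by block. For the $\bmu$-directions: freeze $\bz = \bxi$ and consider the restriction of \eqref{overdet} to the slice $\{\bz = \bxi\}$. This restricted system is exactly the system \eqref{matrixeq} of Theorem~\ref{theorem1}, which we proved cuts out the maximal ideal $\m_\nu$ in $\C[\bmu]$; hence the linearization in the $\bmu$-variables alone already has full rank $n\mult(\mult-1)/2$ at $\bmu=\bnu$. For the $\bz$-directions: we must show that the $n$ partial derivatives $\partial_{z_\ell}$ of the system, evaluated at $(\bxi,\bnu)$, are linearly independent modulo the span of the $\bmu$-derivatives. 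Here the key computational input is the formula $\Nc_{\bz,\bmu}(x_i p) = x_i\,\Nc_{\bz,\bmu}(p) + \mM_i(\bmu)\,\Nc_{\bz,\bmu}(p) + O_{i,\bmu}(p)$ from the preceding lemma, with $O_{i,\bmu}$ taking values in the commutator ideal $\Cc$. Differentiating $\Nc_{\bz,\bmu}(f_k)$ with respect to $z_\ell$ and using this product rule (and the fact that $O_{i,\bmu}$ vanishes at $\bmu=\bnu$, where the matrices commute) should reduce the $\bz$-part of the Jacobian, modulo the commutator equations and modulo $\m_\nu$, to something governed by the original Jacobian $J_\fb(\bxi)$ together with the multiplication matrices $\mM_i(\bnu)$ — concretely, I expect $\partial_{z_\ell}\Nc_{\bxi,\bnu}(f_k)$ to encode $\Lambda_j(\partial_\ell f_k)$, and the claim that these cannot all vanish in the relevant quotient amounts to saying that $\bxi$ is an \emph{isolated} root of $\f$, i.e. $Q$ is $\m_\xi$-primary, so the only dual functionals killing all $\partial_\ell f_k$ and all $f_k$ form precisely the space $\DDD$ whose structure is already pinned down.

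More precisely, the cleanest route is probably to argue abstractly via the deformation: let $C = \C[\bz,\bmu]/J$ where $J$ is the ideal generated by \eqref{overdet}, localized at $(\bxi,\bnu)$; build the map $\Phi\colon C[\bx] \to C^\mult$, $p \mapsto \Nc_{\bz,\bmu}(p)$, and show as in the proof of Theorem~\ref{theorem1} that its kernel $K$ is an ideal containing $\I = (f_k)$ (using now that the commutator relations hold in $C$, so the $\mM_i(\bmu)$ commute modulo $J$) and that $\Phi$ is surjective with $\Phi((\bx-\bxi)^{\alpha_k}) = \e_k$; then $C[\bx]/K \cong C^\mult$ as a $C$-module, and $K \supset \I$ forces (after checking $K$ is $\m$-primary over the fiber, using isolatedness of $\bxi$) that $C$ is a quotient of the local ring of $\f$ at $\bxi$ tensored appropriately — ultimately that $C$ is a field, in fact $C \cong \C$, which is exactly the statement that $(\bxi,\bnu)$ is a simple root. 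The main obstacle, and where the real content lies, is controlling the $\bz$-dependence: unlike Theorem~\ref{theorem1} where $\bz$ was frozen at $\bxi$, here one must show that perturbing $\bz$ away from $\bxi$ genuinely moves the value of $\Nc_{\bz,\bmu}(f_k)$, i.e. that $\bz$ is not a ``silent'' variable. I expect this to come down to showing that the composite $\C[\bx] \xrightarrow{\Phi} C^\mult$ together with the relation $\Phi(x_i p) = (\xi_i + \mM_i(\bmu))\Phi(p)$ identifies $\bz_i - \xi_i$ in $C$ with an element that is simultaneously forced to be zero (because $\mM_i(\bmu)$ is nilpotent over the residue field while $z_i - \xi_i$ would have to act as multiplication by it, and the only way the fiber has the right length $\mult$ is if $z_i = \xi_i$) — equivalently, that $\f$ having $\bxi$ as an isolated root means no nontrivial infinitesimal deformation of $\bz$ is consistent with the normal-form equations. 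Carrying out this last reduction carefully, rather than the routine product-rule bookkeeping, is the crux.
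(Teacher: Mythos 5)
Your skeleton is the right one and matches the paper's: verify that the point is a root, then show the Jacobian at $(\bxi,\bnu)$ has trivial kernel, splitting the tangent directions into the $\bmu$-block (disposed of by Theorem~\ref{theorem1}) and the $\bz$-block. The $\bmu$-half of your argument is exactly what the paper does: after the $\bz$-coefficients are killed, the residual functional is a kernel vector of the Jacobian of \eqref{matrixeq}, which vanishes because that system defines the simple point $\nu$. But the $\bz$-half --- which you yourself flag as ``the crux'' --- is not actually proved in your proposal; you offer two sketches prefaced by ``I expect'' and ``probably,'' and neither contains the idea that closes the argument.

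Here is the missing mechanism. Suppose $\Delta=\sum_\ell a_\ell\partial_{\ell,\xi}+\sum a_{\alpha,\beta}\partial_{\mu_{\alpha,\beta}}$ annihilates all equations of \eqref{overdet}. By the product rule, $\Delta$ annihilates the entire commutator ideal $\Cc$, not just its generators. Reading off the last coordinate of $\Delta(\Nc_{\bz,\bmu}(f_k))=0$ gives \eqref{dualeq}: $\bigl(\sum_\ell a_\ell\partial_{\ell,\xi}\bigr)\cdot\Lambda_{\mult-1}(f_k)$ plus a functional of order at most $\nil$ applied to $f_k$ vanishes, for every $k$. If some $a_\ell\neq 0$, the leading part $\bigl(\sum_\ell a_\ell\partial_{\ell,\xi}\bigr)\cdot\Lambda_{\mult-1}$ is a nonzero polynomial of degree $\nil+1$ in $\K[\bpartial_\xi]$, so one obtains a functional $\tilde\Lambda$ of order $\nil+1$ annihilating $f_1,\ldots,f_N$. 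The lemma on $\Nc_{\bz,\bmu}(x_ip)$, together with an induction on the degree of the multipliers, upgrades this to: $\tilde\Lambda$ annihilates every element of $I$, hence $\tilde\Lambda\in I^\perp\cap\K[\bpartial_\xi]=Q^\perp=\DDD$ by Lemma~\ref{lem:primcomp}. This contradicts the fact that $\nil$ is the maximal order of elements of $\DDD$. In short: a nonzero $\bz$-component manufactures a dual element one order beyond the nil-index. Your heuristic that the nonvanishing ``amounts to saying that $\bxi$ is an isolated root'' points at the wrong invariant --- isolatedness enters only through Lemma~\ref{lem:primcomp} and the finiteness of $\nil$; the actual contradiction is with the maximality of the nil-index, and the nontrivial passage from ``annihilates the generators'' to ``annihilates the ideal'' (which is where the commutator ideal and the $O_{i,\bmu}$ correction terms are genuinely used) is precisely the step you defer. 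Your alternative abstract route (showing the local ring $C$ is $\C$) would need an equivalent of this order-counting argument and is likewise not carried out.
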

\begin{proof}
  For simplicity, let us denote the (non-zero) polynomials appearing in (\ref{overdet}) by 
$$P_1, \ldots, P_M\in  \KK[\bz, {\bmu}],$$ 
where $M\leq N\mult+n(n-1)(\mult-1)(\mult-2)/4$. To prove the theorem, it is sufficient to prove that the columns of the Jacobian matrix of the system $[P_1, \ldots, P_M]$ at $(\bz, {\bmu})=(\bxi, {\nu})$ are linearly independent. The columns of this Jacobian matrix correspond to the elements in $\K[\bz, {\bmu}]^*$ 
$$\partial_{1, \xi}, \ldots, \partial_{n, \xi}, \text{ and } \partial_{\mu_{\balpha, \bbeta}}\;\text{ for } \; (\balpha, \bbeta) \in E\times \partial(E),
$$ 
where $\partial_{i, \xi}$ defined in (\ref{partial}) for  $\bz$ replacing $\bx$, and $\partial_{\bmu_{\balpha, \bbeta}}$ is defined by
$$
\partial_{{\bmu_{\balpha, \bbeta}}}(q) = \frac{d q}{ d \bmu_{\balpha, \bbeta}}\left|_{(\bz, {\mu})=(\bxi, {\nu})} \right. \quad \text{ for } q\in \K[\bz, {\mu}]. 
$$
Suppose there exist $a_1, \ldots, a_n,$ and $a_{\balpha, \bbeta}\in \C$ for  $(\balpha,\bbeta) \in  E\times \partial(E)$ not all zero 
such that 
$$
\Delta:= a_1\partial_{1, \xi}+ \cdots + a_n\partial_{n, \xi}+\sum_{\balpha, \bbeta} a_{\balpha, \bbeta} \partial_{\mu_{\balpha, \bbeta}}\in \K[\bz, {\bmu}]^*
$$
vanishes on all polynomials $P_1, \ldots, P_M$ in (\ref{overdet}).  In
particular, for an element $P_{i} (\bmu)$ corresponding to the commutation
relations and any polynomial $Q  \in \K[\bx, \mu]$, using the product rule for the linear differential operator $\Delta$ we get  
$$
\Delta (P_{i} Q)= \Delta (P_{i}) Q (\bxi,\bnu) + P_{i} (\bnu) \Delta (Q) = 0
$$
since $ \Delta (P_{i}) =0$ and $P_{i} (\bnu)=0$. By the linearity of $\Delta$, for any
polynomial $C$ in the commutator ideal $ \Cc$, we have $\Delta (C)=0$.

Furthermore, since $\Delta(\Nc_{\bz, \bmu}(f_k))=0$ and $$\Nc_{\bxi, \bnu} (f_k) = [\Lambda_{0}(f_k), \ldots, \Lambda_{{\mult-1}}(f_k)]^t,$$ we get that 
{\small \begin{equation}\label{dualeq}
 (a_1\partial_{1, \xi}+ \cdots+a_n\partial_{n, \xi})\cdot \Lambda_{{\mult-1}}(f_k)+ \sum_{|\bgamma|\leq |\balpha_{\mult-1}|}p_{\bgamma}({\nu}) \; \bpartial^{\bgamma}_{\xi}(f_k)=0
\end{equation}}
where $p_{\bgamma}\in \K[{\mu}]$ are  some polynomials in the
variables $\mu$ that do not depend on $f_k$. 
If $a_{1}, \ldots, a_{n}$ are not all zero, we have an element $\tilde{\Lambda}$ of $\K[\bpartial_\bxi]$ of order strictly greater than
${\rm ord} (\Lambda_{{\mult-1}})=\nil$ that vanishes on $f_1, \ldots, f_N$. 

Let us prove that this higher order differential also vanishes on all multiples of  $f_k$ for
$k=1, \ldots, N$.
Let $p\in \K[\bx]$ such that $\Nc_{\bxi,\bnu} (p)=0$, $\Delta
(\Nc_{\bz,\bmu} (p))=0$. By~\eqref{eq:nf}, we have
\begin{eqnarray*}
\lefteqn{\Nc_{\bxi,\bnu} ((x_{i}-\xi_{i}) p)}\\ &= &
(x_{i}-\xi_{i}) \Nc_{\bxi,\bnu} (p) + 
\mM_{i} (\nu) \Nc_{\bxi,\bnu} (p) + O_{i,\nu} (p) =  0
\end{eqnarray*}
and ${\Delta (\Nc_{\bz, \bmu} ((x_{i}-\xi_{i}) p))}$
\begin{eqnarray*}
&= &
\Delta ((x_{i}-\xi_{i}) \Nc_{\bz, \bmu} (p)) + 
\Delta (\mM_{i} (\mu) \Nc_{\bz, \bmu} (p)) + \Delta( O_{\mu} (p))\\
 & = &
\Delta (x_{i}-\xi_{i}) \Nc_{\bxi, \bnu} (p) + (\xi_{i}-\xi_{i})\Delta( \Nc_{\bz, \bmu} (p)) \\
&& ~~~~ + \Delta (\mM_{i} (\mu))  \Nc_{\bxi, \bmu} (p) + 
\mM_{i} (\nu)  \Delta (\Nc_{\bz, \bmu} (p)) \\
&& ~~~~ +  \Delta (O_{i,\bmu} (p))\\
& = & 0.
\end{eqnarray*}
As $\Nc_{\bxi,\bnu} (f_{k})=0$, $\Delta (\Nc_{\bz,\bmu} (f_{k}))=0$,
$i=1,\ldots, N$, we
deduce by induction on the degree of the multipliers and by linearity that for any
element $f$ in the ideal $I$ generated by $f_{1}, \ldots, f_{N}$, we
have 
$$ 
\Nc_{\bxi,\bnu} (f)=0 \hbox{~~~and~~~} \Delta (\Nc_{\bz,\bmu} (f))=0,
$$
which yields $\tilde{\Lambda} \in I^{\bot}$. Thus we have
$\tilde{\Lambda} \in I^{\bot}\cap \K[\bpartial_\bxi]= Q^{\bot}$ (by Lemma
\ref{lem:primcomp}).
As there is no element of degree strictly bigger than $\nil$ in
$Q^{\bot}$, this implies that 
$$a_1=\cdots=a_n=0.$$
Then, by specialization at $\bx=\bxi$, $\Delta$ yields an element of the kernel
of the Jacobian matrix of the system \eqref{matrixeq}.
By Theorem \ref{theorem1}, this Jacobian has a zero-kernel, since it defines
the simple point $\nu$. We deduce that $\Delta=0$ and $(\bxi,\bnu)$ is
an isolated and simple root of the system  \eqref{overdet}.
\end{proof}

The following corollary applies the polynomial system defined in (\ref{overdet}) to refine the precision of an approximate multiple root together with the coefficients of its Macaulay dual basis. The advantage of using this, as opposed to using the Macaulay multiplicity matrix, is that the number of variables is much smaller, as was noted above.  

\begin{corollary} Let $\f\in \KK[\bx]^N$ and $\bxi\in \C^n$ be as above, and let $\Lambda_{0}({\nu}), \ldots, \Lambda_{{\mult-1}}({\nu})$ be its dual basis as in~(\ref{Macbasis}).  Let $E\subset \N^n$ be as above. Assume that we are given  approximates for the singular roots and its inverse system as in (\ref{Macbasis})
$$
\tilde{\bxi} \cong \bxi \; \text{ and } \; \tilde{\nu}_{\alpha_i, \bbeta}\cong \nu_{\alpha_i, \bbeta} \;\;\forall \balpha_i \in E,\; \beta\not\in E, \;|\bbeta|\leq \nil.
$$
Consider the overdetermined system in $\KK[\bz, \mu]$ from (\ref{overdet}).
Then a random square subsystem of (\ref{overdet}) will have 
a simple root at $\bz=\bxi$, $\mu=\nu$ with high probability. Thus,  we can apply Newton's method for this square subsystem to refine $\tilde{\bxi}$ and $\tilde{\nu}_{\alpha_i, \bbeta}$ for $(\balpha_i, \bbeta)\in E\times \partial(E)$. For   $\tilde{\nu}_{\alpha_i, \bgamma}$ with $\bgamma\not \in E^+$ we can use (\ref{restnu}) for the update.  
\end{corollary}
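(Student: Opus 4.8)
The plan is to reduce the corollary entirely to the preceding theorem together with two classical facts: a polynomial system whose Jacobian has full column rank at a point admits a \emph{square} subsystem whose Jacobian is nonsingular there, and Newton's iteration converges quadratically in a neighborhood of a nonsingular root. Write $P_1,\dots,P_M$ for the nonzero polynomials of the system \eqref{overdet}, let $v:=n+|E\times\partial(E)|$ be the number of unknowns $(\bz,\bmu)$, and let $\mathcal J$ be the $M\times v$ Jacobian matrix of $(P_1,\dots,P_M)$ at $(\bxi,\bnu)$. The content of the preceding theorem is precisely that $(\bxi,\bnu)$ is an isolated simple root of \eqref{overdet}, i.e.\ that $\mathcal J$ has full column rank $v$. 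In particular there is a subset of $v$ rows of $\mathcal J$ forming an invertible $v\times v$ block; equivalently, for a generic coefficient matrix $C=[c_{ik}]\in\C^{v\times M}$ the square system $R_i:=\sum_{k=1}^{M} c_{ik}P_k$ ($i=1,\dots,v$) has Jacobian $C\mathcal J$ at $(\bxi,\bnu)$ with $\det(C\mathcal J)\neq 0$, since $\det(C\mathcal J)$ is a polynomial in the entries of $C$ that is not identically zero (it is already nonzero for a suitable $0/1$ selection matrix). Thus the nonsingular locus is a nonempty Zariski-open set of coefficient matrices, and by Schwartz--Zippel a random choice — whether one takes generic linear combinations, or simply a random subset of $v$ of the $P_k$ drawn from a sufficiently large sample — gives, with high probability, a square system $(R_1,\dots,R_v)$ for which $(\bxi,\bnu)$ is a nonsingular, hence isolated and simple, root. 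The approximations $(\tilde\bxi,\tilde\bnu)$ enter only here, to certify numerically that the chosen $v\times v$ Jacobian block is invertible; no approximate quantity appears in the equations, which is the exactness asserted in the introduction.

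Next I would invoke the standard local convergence theory of Newton's method: at a nonsingular root of a square polynomial system the Newton operator is defined and converges quadratically on a neighborhood of the root (Kantorovich's theorem, or Smale's $\alpha$-theory for a certified radius). Since $(\tilde\bxi,(\tilde{\nu}_{\balpha_i,\bbeta})_{(\balpha_i,\bbeta)\in E\times\partial(E)})$ is assumed close to $(\bxi,\bnu)$, iterating the Newton operator of $(R_1,\dots,R_v)$ from this starting point refines simultaneously the coordinates of the singular point and the dual-basis coefficients indexed by $E\times\partial(E)$. Finally, the remaining coefficients $\tilde{\nu}_{\balpha_i,\bgamma}$ with $\bgamma\notin E^+$ are not unknowns of \eqref{overdet}: by \eqref{restnu} each of them is the $(1,i)$ entry of $\mM(\bmu)^{\bgamma}$ evaluated at $\bmu=\bnu$, which is a polynomial in the entries of the parametric multiplication matrices. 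Hence after each Newton step one updates them by a single exact evaluation of \eqref{restnu} at the current iterate, and the given approximations $\tilde{\nu}_{\balpha_i,\bgamma}$ for $\bgamma\notin E^+$ serve only as a consistency check. This proves the corollary.

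The only genuinely non-formal point — and the one I would expect a referee to probe — is the probabilistic statement: one must fix a randomization model for ``random square subsystem'' (generic linear combinations, a random subset of the equations, or a random $0/1$ selection), after which ``with high probability'' is exactly the corresponding Schwartz--Zippel / Zariski-density estimate for the nonvanishing of $\det(C\mathcal J)$. Everything else is an immediate consequence of the preceding theorem and the classical quadratic convergence of Newton's method at a nonsingular root, so I do not anticipate a substantive obstacle.
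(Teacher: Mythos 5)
Your argument is correct and is exactly the intended one: the paper states this corollary without proof, treating it as an immediate consequence of the preceding theorem (full column rank of the Jacobian at $(\bxi,\bnu)$, hence an invertible $v\times v$ block, hence generic linear combinations give a nonsingular square system to which standard Newton convergence applies, with \eqref{restnu} handling the coefficients outside $E^{+}$). Your only caveat worth keeping in mind is the one you already flag: ``with high probability'' is justified for random \emph{linear combinations} via Schwartz--Zippel, whereas a uniformly random \emph{subset} of $v$ equations need not succeed with high probability in the worst case.
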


\begin{example}\label{Ex:Illustrative2}
Reconsider the setup from Ex.~\ref{Ex:Illustrative} with primal
basis $\{1,x_2\}$ and $E = \{(0,0),(0,1)\}$.  We obtain
$$\mM_1(\mu) = \left[\begin{array}{cc} 0 & 0 \\ \mu & 0 \end{array}\right]~~\hbox{and}~~
\mM_2(\mu) = \left[\begin{array}{cc} 0 & 0 \\ 1 & 0 \end{array}\right].$$
The resulting deflated system in (\ref{overdet}) is
{\small
$$F(z_1,z_2,\mu) = \left[\begin{array}{c} z_1 + z_2^2 \\
\mu + 2 z_2 \\ z_1^2 + z_2^2 \\ 2 \mu z_1 + 2 z_2 \end{array}\right]$$
}
which has a nonsingular root at $(z_1,z_2,\mu) = (0,0,0)$ corresponding
to the origin with multiplicity structure $\{1,\partial_{2}\}$.
\end{example}

\section{Examples}\label{Sec:Examples}
Computations for the following examples, as well as several other 
systems, along with \textsc{Matlab} code can be found at
\url{www.nd.edu/~jhauenst/deflation/}.

\subsection{A family of examples}
\noindent We first consider a modification of \cite[Example 3.1]{LiZhi2013}. For any $n\geq 2$, the following system has $n$ polynomials, each of degree at most $3$, in $n$ variables:
\begin{eqnarray*} 
x_1^3+x_1^2-x_2^2, \;x_2^3+x_2^2-x_3, \ldots, x_{n-1}^3+x_{n-1}^2-x_n, \;x_n^2.
\end{eqnarray*}
The origin is a multiplicity $\delta:=2^n$ root having breadth $2$ (i.e., the 
corank of Jacobian at the origin is $2$). 

We apply our parametric normal form method described in \S~\ref{Sec:PointMult}. Similarly as in Remark \ref{reduce}, we can reduce the number of free parameters to be at most $(n-1)(\delta-1)$ using the structure of the primal basis $B=\{x_1^ax_2^b:a<2^{n-1}, \;  b<2\}$.

The following table shows the multiplicity, number of
variables and polynomials in the deflated system, and the time (in
seconds) it took to compute this system (on a iMac, 3.4 GHz Intel Core i7 processor, 8GB 1600Mhz DDR3 memory).
Note that when comparing our method to an 
approach using the
null spaces of Macaulay multiplicity matrices  (see for example \cite{DayZen2005,lvz08}), we found  that for $n\geq 4$ the deflated system derived from the Macaulay multiplicity matrix was too large to compute. This is because  the nil-index at the origin is $2^{n-1}$, so the size of the Macaulay multiplicity matrix is  $\;n\cdot{{2^{n-1}+n-1}\choose{n-1}}\times{{2^{n-1}+n}\choose{n}}$.  
{
$$\begin{array}{|c|c|c|c|c|c|c|c|}
\hline
\multicolumn{2}{|c|}{} &\multicolumn{3}{|c|}{\hbox{New approach}} & \multicolumn{3}{|c|}{\hbox{Null space}} \\
\hline
n & \hbox{mult} & \hbox{vars} & \hbox{poly} & \hbox{time} & \hbox{vars} & \hbox{poly} & \hbox{time}\\
\hline
2 & 4 & 5 & 9 & 1.476 &8&17&2.157\\
\hline
3 & 8 & 17 & 31 & 5.596&192&241&208 \\
\hline
4 & 16 & 49 & 100 & 19.698 &7189 &19804&>76000\\
\hline
5 & 32 & 129 & 296 & 73.168&N/A&N/A&N/A \\
\hline
6 & 64 & 321 & 819 & 659.59 &N/A&N/A&N/A\\
\hline
\end{array}$$}

\subsection{Caprasse system}\label{Sec:Caprasse}
\noindent We consider the Caprasse system \cite{Caprasse88,Posso98}:
{
$$
\begin{array}{l}
f(x_1,x_2,x_3,x_4) = 
\left[
\begin{array}{l} 
{x_{{1}}}^{3}x_{{3}}-4\,x_{{1}}{x_{{2}}}^{2}x_{{3}}-4\,{x_{{1}}}^{2}x_{{2}}x_{{4}}-2\,{x_{{2}}}^{3}x_{{4}}-4\,{x_{{1}}}^{2}+\\ ~~~~~~~~~10\,{x_{{2}}}^{2}- 4\,x_{{1}}x_{{3}}+10\,x_{{2}}x_{{4}}-2,\\
x_{{1}}{x_{{3}}}^{3}-4\,x_{{2}}{x_{{3}}}^{2}x_{{4}}-4\,x_{{1}}x_{{3}}{x_{{4}}}^{2}-2\,x_{{2}}{x_{{4}}}^{3}-4\,x_{{1}}x_{{3}}+\\ ~~~~~~~~~10\,x_{{2}}x_{{4}}- 4\,{x_{{3}}}^{2}+10\,{x_{{4}}}^{2}-2,\\
{x_{{2}}}^{2}x_{{3}}+2\,x_{{1}}x_{{2}}x_{{4}}-2\,x_{{1}}-x_{{3}},\\
{x_{{4}}}^{2}x_{{1}}+2\,x_{{2}}x_{{3}}x_{{4}}-2\,x_{{3}}-x_{{1}}
\end{array}\right]
\end{array}
$$
}at the multiplicity $4$ root $\bxi=(2, -\sqrt{-3}, 2, \sqrt{-3})$.

We first consider simply deflating the root.  
Using the approaches of \cite{DayZen2005,HauWam13,lvz06}, one iteration suffices.
For example, using an extrinsic and intrinsic version of \cite{DayZen2005,lvz06},
the resulting system consists of 10 and 8 polynomials, respectively,
and 8 and 6 variables, respectively.
Following \cite{HauWam13}, using all minors results in a system
of 20 polynomials in 4 variables which can be reduced to
a system of 8 polynomials in 4 variables using the $3\times3$ minors
containing a full rank~$2\times2$~submatrix.
The approach of \S~\ref{Sec:Deflation} using an $|\bm i|=1$
step creates a deflated system consisting 
of $6$ polynomials in $4$ variables.  
In fact, since the null space of the Jacobian at the root
is $2$ dimensional, adding two polynomials is necessary and sufficient.

Next, we consider the computation of both the point and multiplicity structure.
Using an intrinsic null space approach via a second order Macaulay
matrix, the resulting system consists of $64$ polynomials in $37$ variables.  
In comparison, 
using the primal 
basis \mbox{$\{1,x_1,x_2$, $x_1x_2\}$}, the approach 
of~\S~\ref{Sec:PointMult} 
 constructs a system
of $30$ polynomials in $19$ variables.

\subsection{Examples with multiple iterations}\label{Sec:MultipleIterations}
\noindent In our last set of examples, we consider simply deflating a root of the last three systems 
from \cite[\S~7]{DayZen2005}
and a system from \cite[\S~1]{Lecerf02}, each of which 
required more than one iteration to deflate.  
These four systems and corresponding points are:
{\small\begin{itemize}
\item[1:] $\{x_1^4 - x_2 x_3 x_4, x_2^4 - x_1 x_3 x_4, x_3^4 - x_1 x_2 x_4, x_4^4 - x_1 x_2 x_3\}$ at $(0,0,0,0)$ with $\mult = 131$ and $o = 10$;
\item[2:] $\{x^4, x^2 y + y^4, z + z^2 - 7x^3 - 8x^2\}$ at $(0,0,-1)$ with $\mult = 16$ and $o = 7$;
\item[3:] $\{14x + 33y - 3\sqrt{5}(x^2 + 4xy + 4y^2 + 2) + \sqrt{7} + x^3 + 6x^2y + 12xy^2 + 8y^3, 41x - 18y - \sqrt{5} + 8x^3 - 12x^2y + 6xy^2 - y^3 + 3\sqrt{7}(4xy - 4x^2 - y^2 - 2)\}$ at $Z_3 \approx (1.5055, 0.36528)$ with $\mult = 5$ and $o = 4$;
\item[4:] $\{2x_1 + 2x_1^2 + 2x_2 + 2x_2^2 + x_3^2 - 1, 
\mbox{$(x_1 + x_2 - x_3 - 1)^3-x_1^3$}, \\
(2x_1^3 + 5x_2^2 + 10x_3 + 5x_3^2 + 5)^3 - 1000 x_1^5\}$ at
$(0,0,-1)$ with $\mult = 18$ and $o = 7$.
\end{itemize}}

We compare using the following four methods:
  (A) intrinsic slicing version of \cite{DayZen2005,lvz06};
  (B) isosingular deflation \cite{HauWam13} via a maximal rank submatrix;
  (C) ``kerneling'' method in \cite{GiuYak13};
  (D) approach of \S~\ref{Sec:Deflation} using an $|\bm i|=1$ step.
We performed these methods without the use of preprocessing and postprocessing
as mentioned in \S~\ref{Sec:Deflation} to directly compare the 
number of nonzero distinct polynomials, variables, and iterations
for each of these four deflation methods.
\vskip -0.05in
{
$$
  \begin{array}{|c|c|c|c|c|c|c|c|c|c|c|c|c|}
\hline
 & 
 \multicolumn{3}{|c|}{\hbox{Method A}} & 
 \multicolumn{3}{|c|}{\hbox{Method B}} &
 \multicolumn{3}{|c|}{\hbox{Method C}} & 
 \multicolumn{3}{|c|}{\hbox{Method D}}\\
\cline{2-13}
& 
\hbox{Poly} & \hbox{Var} & \hbox{It} &
\hbox{Poly} & \hbox{Var} & \hbox{It} &
\hbox{Poly} & \hbox{Var} & \hbox{It} &
\hbox{Poly} & \hbox{Var} & \hbox{It} \\
\hline
1 & 16 & 4 & 2 & 22 & 4 & 2 & 22 & 4 & 2 & 16 & 4 & 2 \\
\hline
2 & 24 & 11 & 3 & 11 & 3 & 2 & 12 & 3 & 2 & 12 & 3 & 3 \\
\hline
3 & 32 & 17 & 4 & 6 & 2 & 4 & 6 & 2 & 4 & 6 & 2 & 4 \\
\hline
4 & 96 & 41 & 5 & 54 & 3 & 5 & 54 & 3 & 5 & 22 & 3 & 5 \\
\hline
\end{array}$$}%
For breadth one singular points as in system 3, methods B, C, and D yield
the same deflated system.
Except for methods B and C on the second system, all four methods required the same number of iterations to deflate the root.  
For the first and third systems, our new approach matched
the best of the other methods and resulted in a 
significantly smaller deflated system for~the~last~one.

\def\cprime{$'$} \def\cprime{$'$} \def\cprime{$'$}
\def\sameauthors{------.~}

\end{document}